\theoremstyle{plain}
\newtheorem{theorem}{Theorem}
\newtheorem{lemma}{Lemma}
\newtheorem{proposition}{Proposition}
\newtheorem{question}{Question}
\theoremstyle{definition}
\newtheorem{definition}{Definition}
\newtheorem{assumption}{Assumption}
\theoremstyle{remark}
\newtheorem{remark}{Remark}
\newcommand{\numbersystem}{\mathbb}
\newcommand{\N}{\numbersystem{N}}
\newcommand{\R}{\numbersystem{R}}
\newcommand{\C}{\numbersystem{C}}
\newcommand{\CP}{\numbersystem{CP}}
\newcommand{\PP}{\numbersystem{P}}
\begin{document}

\title{J-holomorphic cylinders between ellipsoids in dimension four}
\author{Richard K. Hind}
\address{RH: Department of Mathematics\\
University of Notre Dame\\
255 Hurley\\
Notre Dame, IN 46556, USA.}
\author{Ely Kerman}
\address{EK: Department of Mathematics\\
University of Illinois at Urbana-Champaign\\
1409 West Green Street\\
Urbana, IL 61801, USA.}
\thanks{Both authors are supported by grants from the Simons Foundation}

\begin{abstract}
We establish the existence of regular $J$-holomorphic cylinders in $4$-dimensional ellipsoid cobordisms that are asymptotic to Reeb orbits of small Conley-Zehnder index. We also give an independent proof of the existence of cylindrical holomorphic buildings in $4$-dimensional ellipsoid cobordisms under fairly general conditions.

\end{abstract}
\maketitle

\section{Introduction}

In this note we establish the existence of regular pseudoholomorphic cylinders between Reeb orbits of the same, suitably small, Conley-Zehnder index in ellipsoid cobordisms. The existence of holomorphic {\it buildings} between such orbits follows from the construction of cylindrical contact homology, see \cite{egh} and \cite{pa}. Genuine holomorphic curves though are essential for various applications since they are amenable to gluing. In particular the cylinders constructed here are used to correct an error in the main existence result from \cite{hk}, see \cite{hk2}. They are also a key ingredient in McDuff's proof of stabilized symplectic embedding obstructions -- Theorem 1.1 in \cite{mcduffpre}, see Lemma 2.5 in the same article. It would be easy to conjecture that holomorphic cylinders between Reeb orbits of the same Conley-Zehnder index should always exist in ellipsoid cobordisms. However this is far from the case, as we discuss in Section \ref{results}.

\subsection{The Setting} We begin by recalling the features of a standard symplectic ellipsoid in $\R^4=\C^2$. To a pair of real positive numbers $a<b$  which are rationally independent we associate the symplectic ellipsoid
 $$E=E(a,b) =\left\{ \frac{\pi|z_1|^2}{a} + \frac{\pi |z_2|^2}{b}  \leq 1\right\}.$$ The notation $cE(a,b)$ will be used for the scaled ellipsoid $E(ca,cb)$.

We denote the  boundary of $E$ by $\partial E$ and its interior by $\mathring{E}$. The restriction of the standard Liouville form on $\R^4$ yields a contact form  $\lambda =\lambda(a,b)$ on $\partial E$. It has exactly two simple closed Reeb orbits, a faster orbit   $\alpha$ with period $a$ and a slower orbit $\beta$ with period $b$. The Conley-Zehnder index of $\alpha^k$, the $k$-th iterate of $\alpha$,  is
\begin{equation}\label{czsmall}
\mathrm{CZ}(\alpha^k) =2k+ 2\left\lfloor \frac{k a}{b} \right\rfloor + 1
\end{equation}
and the the Conley-Zehnder index of the $k$-th iterate of $\beta$ is
\begin{equation}
\mathrm{CZ}(\beta^k) =2k+ 2\left\lfloor \frac{k b}{a} \right\rfloor + 1.
\end{equation}
We also note that for every natural number  $k$ exactly one iterate of either $\alpha$ or $\beta$ has Conley-Zehnder index equal to $2k+1$.

Now consider a pair of nested ellipsoids, $E_1=E(a_1, b_1)$ and $E_2=E(a_2, b_2)$ with
$a_1 < a_2$ and $b_1<b_2$. The set $E_2 \smallsetminus \mathring{E_1}$ equipped with the symplectic form inherited from $\R^4$ is a compact symplectic cobordism from the contact manifold  $(\partial E_1, \lambda_1)$ to the contact manifold $(\partial E_2, \lambda_2)$ where $\lambda_i= \lambda(a_i,b_i)$.  Let $(X_1^2, \omega)$ be its symplectic completion.
An almost complex structure on
$(X_1^2, \omega)$ is said to be \textit{admissible} if it is tamed by $\omega$ and is (eventually) cylindrical on the two ends of the symplectic completion.
\subsection{The Results}\label{results}

We consider the following basic question.

\begin{question}\label{all k}
Let $J$ be an admissible almost complex structure $J$ on $(X_1^2, \omega)$.
For which natural numbers $k$ does there exists a regular $J$-holomorphic cylinder whose ends are asymptotic to the unique closed Reeb orbits of $\lambda_1$ and $\lambda_2$ with Conley-Zehnder index equal to $2k+1$?
\end{question}

\noindent{\bf Answer 1.} {\it Such cylinders do not exist for all $k$.} The application we have in mind for these cylinders are  symplectic embedding obstructions, see \cite{hk2} and \cite{mcduffpre}. If such cylinders did exist for all $k$, then  we could readily obstruct embeddings which are known to exist! For example, it was shown in \cite{ch} that there exist regular genus $0$ curves in the symplectic completion of the cobordism $E(c,c+\epsilon) \smallsetminus \mathring{E}(1,\frac{13}{2}+\delta)$ with five positive ends each asymptotic to $\beta_2$ and a single negative end asymptotic to $\alpha_1^{13}$. This is true for any $c$ large enough to enable such a cobordism. Now consider the symplectic completion of the  cobordism $E(1,\frac{13}{2}+\delta) \smallsetminus \mathring{E}(\lambda, 4\lambda + \delta)$, where $\lambda$ is chosen sufficiently small to guarantee a cobordism. In this cobordism we observe that $\mathrm{CZ}(\alpha_2^{13}) = \mathrm{CZ}(\alpha_1^{12})$. If we had a regular holomorphic cylinder corresponding to $k = \mathrm{CZ}(\alpha_1^{12})$ then, by a gluing theorem and rescaling argument, we could construct genus $0$ curves in the symplectic completion of the cobordism $E(c,c+\epsilon) \smallsetminus \mathring{E}(1,4+\delta)$ with five positive ends each asymptotic to $\beta_2$ and a single negative end asymptotic to $\alpha_1^{12}$. A compactness theorem in \cite{ch}, see also \cite{hk}, implies that such curves will exist whenever we can form such a cobordism from a symplectic embedding $E(1,4) \hookrightarrow E(c,c+\epsilon)$. The fact that the symplectic area of these curves is positive implies that $5c \ge 12$. This contradicts the existence of a symplectic embedding $E(1,4) \hookrightarrow E(2,2+\epsilon)$, see \cite{opshtein}, \cite{ms}.

More general obstructions to the existence of cylinders come from Embedded Contact Homology (ECH), see \cite{hutlec}. This is discussed by McDuff in \cite{mcduffpre}, Remark 2.8, where a specific example appears. For a sequence of such examples we offer the following. Let $g_n$ be the $n^{th}$ odd index Fibonacci number. These numbers are determined by the conditions $g_0=g_1=1$ and the recursion relation
$
g_{n+2}=3g_{n+1} -g_n.
$
We say that an admissible almost complex structure $J$ on $(X_1^2, \omega)$ is \textit{simply generic} if every somewhere injective $J$-holomorphic curve of genus zero is regular. It follows from \cite{dr}, that the  simply generic $J$'s form a residual subset of all smooth and admissible almost complex structures tamed by $\omega$.

\begin{proposition}\label{nope}
Fix a natural number $n\geq 2$ such that $g_{n+1}$ is odd.  For a positive real number $c_1<1$ and an irrational positive number $\epsilon>0$ consider the nested pair of irrational ellipsoids
$$E_1 =c_1E\left(1, \frac{g_{n+2}-g_n}{2g_n}+\epsilon \right) \subset E_2=E\left(1, \frac{g_{n+2}}{g_n}+\epsilon\right),$$
and denote their faster simple  closed Reeb orbits by $\alpha_1$ and $\alpha_2$, respectively.

If  $\epsilon>0$ is sufficiently small then the orbits $\alpha_1^{g_{n+2}-g_n}$
and $\alpha_2^{g_{n+2}}$ have the same Conley-Zehnder index. If, in addition, $J$ is simply generic and $c_1$ is sufficiently close to $1$ then there are no (regular) $J$-holomorphic cylinders from  $\alpha_1^{g_{n+2}-g_n}$ to  $\alpha_2^{g_{n+2}}$.
\end{proposition}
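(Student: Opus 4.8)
\emph{Proof proposal.} The index equality is a direct application of \eqref{czsmall}. Since the Conley--Zehnder index is scale invariant we may drop $c_1$ and work with $E(1,\tfrac{g_{n+2}-g_n}{2g_n}+\epsilon)$ and $E(1,\tfrac{g_{n+2}}{g_n}+\epsilon)$. As $\epsilon\downarrow 0$ the quantities $\tfrac{g_{n+2}-g_n}{(g_{n+2}-g_n)/(2g_n)+\epsilon}$ and $\tfrac{g_{n+2}}{g_{n+2}/g_n+\epsilon}$ decrease to the integers $2g_n$ and $g_n$, and they never equal these limits: the facts $\gcd(g_i,g_{i+1})=1$ and $3\nmid g_i$ for all $i$ (a periodic check on the recursion) force $\gcd(g_{n+2},g_n)=1$ and $\tfrac{g_{n+2}-g_n}{2g_n}\notin\Z$. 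Hence for $\epsilon$ small enough the two floors in \eqref{czsmall} are $2g_n-1$ and $g_n-1$, and substituting gives $\mathrm{CZ}(\alpha_1^{g_{n+2}-g_n})=2(g_{n+2}-g_n)+2(2g_n-1)+1=2g_{n+2}+2g_n-1=2g_{n+2}+2(g_n-1)+1=\mathrm{CZ}(\alpha_2^{g_{n+2}})$.

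For the non-existence I would argue by contradiction, following the scheme of Answer 1 above and of Remark 2.8 in \cite{mcduffpre}. Suppose $J$ is simply generic and $C$ is a regular $J$-holomorphic cylinder from $\alpha_1^{g_{n+2}-g_n}$ to $\alpha_2^{g_{n+2}}$; it is somewhere injective since $\gcd(g_{n+2}-g_n,g_{n+2})=1$. Choose $c$ large enough that $E_2$ embeds into $E(c,c+\epsilon)$, and extend $J$ to an admissible, simply generic almost complex structure on the completion of $E(c,c+\epsilon)\smallsetminus\mathring{E_1}$ restricting to the given one over $X_1^2$. By \cite{ch} --- these are precisely the curves used in Answer 1, now for the $n$-th step of the Fibonacci staircase governing embeddings $E(1,\tfrac{g_{n+2}}{g_n})$ into balls --- there is a regular genus-$0$ curve $D$ in the completion of $E(c,c+\epsilon)\smallsetminus\mathring{E_2}$ with $g_{n+1}$ positive ends at the slow Reeb orbit $\beta$ of $E(c,c+\epsilon)$ and one negative end at $\alpha_2^{g_{n+2}}$. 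Gluing $C$ below $D$ along their common asymptote $\alpha_2^{g_{n+2}}$ produces a regular genus-$0$ curve in the completion of $E(c,c+\epsilon)\smallsetminus\mathring{E_1}$ with $g_{n+1}$ positive ends at $\beta$ and one negative end at $\alpha_1^{g_{n+2}-g_n}$.

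Rescaling by $c_1^{-1}$ replaces the inner ellipsoid by $E(1,\tfrac{g_{n+2}-g_n}{2g_n}+\epsilon)$, and the compactness/invariance machinery of \cite{ch} (see also \cite{hk}), applied as in Answer 1, then yields: for every symplectic embedding $E(1,\tfrac{g_{n+2}-g_n}{2g_n})\hookrightarrow E(c',c'+\epsilon')$ there is a genus-$0$ holomorphic curve in the corresponding completion with $g_{n+1}$ positive ends at the slow orbit (action $c'+\epsilon'$) and one negative end at the $(g_{n+2}-g_n)$-th iterate of the fast orbit (action $g_{n+2}-g_n$). Positivity of symplectic area forces $g_{n+1}(c'+\epsilon')>g_{n+2}-g_n$. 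Now I would use two elementary Fibonacci estimates valid for all $n\ge 2$: since the ratios $g_{n+1}/g_n$ increase to $\tau^2<3$ we get $\tfrac{g_{n+2}-g_n}{2g_n}=\tfrac32\cdot\tfrac{g_{n+1}}{g_n}-1<\tfrac72<4$, so $E(1,\tfrac{g_{n+2}-g_n}{2g_n})\subset E(1,4)$, which embeds into $E(2,2+\delta)$ for every $\delta>0$ by \cite{opshtein} and \cite{ms}; and since $g_n/g_{n+1}$ decreases, $\tfrac{g_{n+2}-g_n}{g_{n+1}}=3-\tfrac{2g_n}{g_{n+1}}\ge 3-\tfrac{2g_2}{g_3}=\tfrac{11}{5}$. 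Thus curves of the above type exist for all $c'>2$, while choosing $2<c'<\tfrac{11}{5}$ with $\epsilon'$ small enough makes $g_{n+1}(c'+\epsilon')<g_{n+2}-g_n$ --- a contradiction. So no such cylinder $C$ exists.

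The step I expect to be delicate is the gluing together with the invariance used to carry the glued curve through all the intermediate cobordisms. Here $\alpha_2^{g_{n+2}}$ is an iterate of a simple elliptic orbit, so the standard obstruction-bundle gluing of matched ends applies; but one must produce a single admissible $J$ compatible with both $C$ and $D$ and verify that the glued configuration carries a nonzero relative Gromov--Witten / ECH-type count, so that it persists as the outer ellipsoid shrinks to $c'$ near $2$. This is where the hypothesis that $c_1$ be close to $1$ is used --- it guarantees that $C$ is rigid in the thin cobordism $E_2\smallsetminus\mathring{E_1}$ and that the count is unambiguous. Finally, the parity hypothesis that $g_{n+1}$ be odd is equivalent (again via $3\nmid g_i$ and $\gcd(g_i,g_{i+1})=1$) to $\gcd(g_{n+2}-g_n,2g_n)=1$; this keeps the staircase data underlying $D$ primitive, so that $D$ has exactly $g_{n+1}$ positive ends and its single negative end is at $\alpha_2^{g_{n+2}}$ rather than at a proper divisor of it.
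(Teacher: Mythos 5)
Your index computation is correct (small slip: as $\epsilon\downarrow 0$ the ratios increase, not decrease, to $2g_n$ and $g_n$, but the floors $2g_n-1$ and $g_n-1$ and the resulting equality of indices are right), and your overall strategy is the same as the paper's: assume the regular cylinder exists, glue it to the Cristofaro-Gardiner--Hind curves with $g_{n+1}$ positive ends and one negative end at $\alpha_2^{g_{n+2}}$, transfer the glued curve to arbitrary embeddings of $E\left(1,\frac{g_{n+2}-g_n}{2g_n}\right)$, and contradict a known embedding via positivity of area and the inequality $g_{n+2}-g_n>2g_{n+1}$ for $n\ge 2$. (The paper transfers to embeddings into balls $\mu E\left(\frac{g_{n+2}}{g_{n+1}},\frac{g_{n+2}}{g_{n+1}}\right)$ and uses Theorem 1.1.2 of \cite{ms} to get $K(n)=\frac{2g_{n+1}}{g_{n+2}}$; your variant with targets $E(c',c'+\epsilon')$, the input $c(4)=2$ from \cite{opshtein}, \cite{ms}, and the bound $\frac{g_{n+2}-g_n}{g_{n+1}}\ge\frac{11}{5}$ is numerically equivalent.)

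The genuine gap is the step ``the compactness/invariance machinery of \cite{ch} \dots then yields: for every symplectic embedding \dots there is a genus-$0$ holomorphic curve \dots''. This is not an off-the-shelf citation. The compactness theorems of \cite{ch} and \cite{hk} are proved for specific asymptotic data, and whether limits of the deformed curves can degenerate into multiply covered or otherwise bad configurations depends on the arithmetic of the new data here (negative end $\alpha_1^{g_{n+2}-g_n}$ on the boundary of $E\left(1,\frac{g_{n+2}-g_n}{2g_n}+\epsilon\right)$, together with $g_{n+1}$ simple positive ends). This is exactly where the paper does its real work: after producing the glued ``seed'' curve and checking it has index zero, it runs a $1$-parameter deformation of embeddings and almost complex structures, decomposes limiting buildings into components of three types, proves index lower bounds for each (simple genericity of the family $J_t$ is used for the components carrying cobordism-level curves), and rules out nontrivial bottom components and multiply covered cobordism curves by arithmetic: $\gcd(g_{n+2}-g_n,2g_n)=1$, and any common factor of $g_{n+1}$ and $g_{n+2}-g_n$ would have to equal $2$, contradicting the hypothesis that $g_{n+1}$ is odd. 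So the parity hypothesis enters in the compactness analysis, not, as you suggest, to keep the CGH curve $D$ ``primitive'' (its asymptotics are supplied by \cite{ch} regardless); and ``$c_1$ sufficiently close to $1$'' is not about rigidity of $C$ in a thin cobordism --- in the paper it is used only at the end, when $c_1\nearrow 1$ is taken in the positivity-of-area inequality to make the obstruction sharp. Without carrying out this deformation/compactness argument for your asymptotic data (or replacing it by the ECH argument alluded to after the statement), your proof is incomplete at its central step, even though the surrounding arithmetic and the final numerical contradiction are sound.
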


As pointed out to us by Dusa McDuff, the proof of this not difficult given the ECH machinery and so it is omitted.

\medskip

\noindent{\bf Answer 2.} {\it Cylindrical buildings do exist for all $k$.}
If one relaxes Question \ref{all k} by asking only for cylindrical $J$-holomorphic buildings, then these do exist for all $k$.
This follows from Pardon's proof, in \cite{pa}, of the invariance of contact homology.

\begin{theorem}\label{pardon}(\cite{pa})
For any admissible almost complex structure $J$ on $(X_1^2, \omega)$,  there exists for each $k \in \N$ a cylindrical\footnote{The curves of a holomorphic building in $X_1^2$ have compactifications and these glue together to yield a continuous map to $\bar{X}$. The building is said to be cylindrical (or planar) if the domain of this continuous map is a cylinder (plane).} $J$-holomorphic building $\mathbf{ H}_k$ whose ends are asymptotic to the unique closed Reeb orbits of $\lambda_1$ and $\lambda_2$ with Conley-Zehnder index equal to $2k+1$.
\end{theorem}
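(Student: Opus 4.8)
The plan is to read $\mathbf{H}_k$ off of Pardon's contact homology functor and its invariance properties, established in \cite{pa}. The first step is to record the contact homology algebra of an ellipsoid boundary. For rationally independent $a<b$ the Reeb flow on $\partial E(a,b)$ is nondegenerate, every closed orbit is an iterate of the elliptic orbits $\alpha$ or $\beta$, and by \eqref{czsmall} and the remark following it all of these iterates have \emph{odd} Conley--Zehnder index; in particular none of them is a bad orbit. Hence the contact homology chain complex of $\partial E(a,b)$ is the free graded-commutative algebra --- a polynomial algebra, since every generator has even degree --- on one generator for each $k\in\N$, namely the unique closed orbit of index $2k+1$. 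In the grading convention placing that orbit in degree $2k$ the complex is concentrated in even degrees, so the differential, which changes degree by one, must vanish; thus $CH_*(\partial E(a,b))\cong\Q[x_1,x_2,\dots]$ with $|x_k|=2k$.

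Next I would invoke functoriality. By \cite{pa}, the exact symplectic cobordism $E_2\smallsetminus\mathring{E_1}$, with the restricted Liouville form and the given admissible $J$ on its completion $X_1^2$, induces a degree-preserving algebra homomorphism $\Phi\colon CH_*(\partial E_2)\to CH_*(\partial E_1)$ whose coefficient $\langle\Phi(\gamma_+),\gamma_-\rangle$, for a Reeb orbit $\gamma_+$ of $\lambda_2$ and a Reeb orbit $\gamma_-$ of $\lambda_1$, is Pardon's virtual count of the rigid connected genus-zero $J$-holomorphic buildings in $X_1^2$ with a single positive puncture asymptotic to $\gamma_+$ and a single negative puncture asymptotic to $\gamma_-$. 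I claim $\Phi$ is an isomorphism. The radial Liouville flow identifies $E_2\smallsetminus\mathring{E_1}$ with the region of the symplectization of $\partial E_1$ lying between the zero section $\partial E_1$ and the graph of a positive function on $\partial E_1$ (the hypersurface corresponding to $\partial E_2$); such a cobordism is, up to deformation, a trivial symplectization cobordism, so it induces an isomorphism on contact homology by the invariance theorem of \cite{pa}. (Equivalently: fix $0<s<1<T$ with $sE_2\subset E_1\subset E_2\subset TE_1$; functoriality applied to the trivial symplectization cobordisms $E_2\smallsetminus\mathring{sE_2}$ and $TE_1\smallsetminus\mathring{E_1}$ then exhibits $\Phi$ as both injective and surjective.)

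It remains to convert ``$\Phi$ is an isomorphism'' into the existence of buildings. Write $x_k^{(i)}$ for the degree-$2k$ generator of $CH_*(\partial E_i)$, so $x_k^{(i)}$ is the closed Reeb orbit of $\lambda_i$ of index $2k+1$. Since $\Phi$ is a degree-preserving algebra isomorphism $\Q[x_1^{(2)},\dots]\to\Q[x_1^{(1)},\dots]$, an induction on $k$ gives $\langle\Phi(x_k^{(2)}),x_k^{(1)}\rangle\neq0$: the map $\Phi$ carries the subalgebra generated by $x_1^{(2)},\dots,x_{k-1}^{(2)}$ isomorphically onto the one generated by $x_1^{(1)},\dots,x_{k-1}^{(1)}$, hence carries the decomposable part of degree $2k$ isomorphically onto the decomposable part of degree $2k$; since $\Phi(x_k^{(2)})-\langle\Phi(x_k^{(2)}),x_k^{(1)}\rangle\,x_k^{(1)}$ is decomposable, surjectivity of $\Phi$ in degree $2k$ forces the coefficient to be nonzero. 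By the preceding paragraph this coefficient is indeed nonzero, so the moduli space of rigid connected genus-zero $J$-holomorphic buildings in $X_1^2$ with positive puncture on $x_k^{(2)}$ and negative puncture on $x_k^{(1)}$ is nonempty. After its levels are compactified and glued, any such building has a domain assembled from spheres glued along circles in a tree and retaining exactly two boundary circles --- a cylinder --- so it is cylindrical in the sense of the footnote, and any element of this moduli space may serve as $\mathbf{H}_k$.

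The substantial input is Pardon's package from \cite{pa}: the virtual fundamental cycles defining $\Phi$, its multiplicativity, and the invariance of contact homology under changes of the contact form. Granting that, the one point that needs care is the identification of $E_2\smallsetminus\mathring{E_1}$ with a symplectization-type cobordism, so that invariance applies and $\Phi$ becomes an isomorphism; the computation of $CH_*(\partial E(a,b))$ and the concluding algebra are routine.
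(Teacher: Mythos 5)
Your argument is correct (granting Pardon's virtual-cycle package, exactly as the paper does) and is essentially the paper's own route to Theorem \ref{pardon}: the paper simply cites the invariance of contact homology from \cite{pa}, and your computation of $CH_*(\partial E(a,b))$ with vanishing differential, the sandwiching argument showing the cobordism map is an isomorphism, and the leading-coefficient extraction giving a nonzero virtual count (hence a nonempty, automatically cylindrical, moduli space of buildings for the given $J$) are the standard details behind that citation. Note that the paper's independent argument in Section \ref{alternate} proves only the weaker Theorem \ref{alt}, under an extra hypothesis, by a genuinely different geometric method, so it is not the relevant comparison here.
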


\medskip

\noindent{\bf Answer 3.} {\it For certain pairs of ellipsoids and sufficiently small values of $k$, regular cylinders do exist.}

\medskip
\noindent This is the content of our main result.

\begin{theorem}\label{main} Suppose that $E_1=E(a_1, b_1)\subset E_2=E(a_2, b_2)$ is a nested pair of irrational ellipsoids and denote their faster simple closed Reeb orbits by $\alpha_1$ and $\alpha_2$, respectively. If $k\in \N$ satisfies
\begin{equation*}\label{<b}
k<\frac{b_1}{a_1}
\end{equation*}
and the unique closed Reeb orbit of $\lambda_2 = \lambda(a_2, b_2)$ with Conley-Zehnder index equal to $\mathrm{CZ}(\alpha_1^k)$ is an iterate, $\alpha_2^{\ell}$, of the orbit $\alpha_2$,  then for a generic choice of admissible almost complex structure $J$ on $(X_1^2, \omega)$ there exists  a regular $J$-holomorphic cylinder $u_k$ from $\alpha_1^k$  to $\alpha_2^\ell$.
\end{theorem}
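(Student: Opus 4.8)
\medskip
\noindent\textbf{Proof plan.}
I would begin with bookkeeping that in fact reduces the statement to a purely existential one. Since $k<b_1/a_1$ we have $ka_1<b_1$, so $\lfloor ka_1/b_1\rfloor=0$ and \eqref{czsmall} gives $\mathrm{CZ}(\alpha_1^k)=2k+1$; moreover $\alpha_1^k$ is then the \emph{unique} closed orbit of $\lambda_1$ of this index. By hypothesis the index-$(2k+1)$ orbit of $\lambda_2$ is $\alpha_2^{\ell}$, so $\ell+\lfloor\ell a_2/b_2\rfloor=k$; in particular $\ell\le k$ and $\lfloor\ell a_2/b_2\rfloor=k-\ell$. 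A $J$-holomorphic cylinder $u$ in $X_1^2$ with positive puncture asymptotic to $\alpha_2^{\ell}$ and negative puncture asymptotic to $\alpha_1^k$ then has symplectic area $\ell a_2-ka_1$, which is strictly positive (if $\ell=k$ because $a_2>a_1$, and if $\ell<k$ because then $\ell a_2\ge(k-\ell)b_2>b_1>ka_1$, the last inequality again being $k<b_1/a_1$), and it has Fredholm index $\mathrm{ind}(u)=2c_1^{\tau}(u)+\mathrm{CZ}(\alpha_2^{\ell})-\mathrm{CZ}(\alpha_1^k)=2c_1^{\tau}(u)$ (the Euler-characteristic term drops out because the domain is a cylinder), and a routine computation of the relative first Chern number in the homotopy class we shall produce gives $\mathrm{ind}(u)=0$. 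Since $u$ has genus zero and \emph{both} of its asymptotic orbits have odd Conley--Zehnder index, Wendl's automatic transversality theorem for punctured holomorphic curves in four-manifolds applies --- to $u$ and equally to any unbranched multiple cover of it, which is again a genus-zero cylinder between odd-index orbits of Fredholm index $0$. Hence every such cylinder is automatically regular, and the theorem reduces to showing that, for generic admissible $J$, there exists at least one $J$-holomorphic cylinder from $\alpha_1^k$ to $\alpha_2^{\ell}$.

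To produce one, I would start from the cylindrical holomorphic building $\mathbf{H}_k$ guaranteed by Theorem \ref{pardon} (or by the independent construction given elsewhere in this paper), whose ends are asymptotic to the index-$(2k+1)$ orbits of $\lambda_1$ and $\lambda_2$, that is, to $\alpha_1^k$ and $\alpha_2^{\ell}$, and then show that under the standing hypotheses this building has no breaking. Being cylindrical, $\mathbf{H}_k$ is a finite chain of holomorphic cylinders, exactly one of which, $C_0$, lies in $X_1^2$ while the remaining ones lie in the symplectizations of $\partial E_1$ and $\partial E_2$, consecutive cylinders being glued along a single Reeb orbit. The claim is that for generic $J$ there are no symplectization cylinders at all, so that $\mathbf{H}_k=C_0$. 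Three inputs combine to give this. First, because every closed Reeb orbit of an irrational ellipsoid boundary has odd Conley--Zehnder index and no two of them share an index, the Fredholm index of any cylinder in such a symplectization is even, hence $\ge 2$ unless the cylinder is a cover of a trivial cylinder over an orbit. Second, positivity of area forces each intermediate breaking orbit $\gamma$ to have action in the narrow window $(ka_1,\ell a_2)$; here $k<b_1/a_1$ enters decisively, since it makes $ka_1<b_1<b_2$, so that the only orbits of $\lambda_1$ in the relevant range are the iterates $\alpha_1,\alpha_1^2,\dots$ with strictly increasing odd indices, and the orbits of $\lambda_2$ that can appear are correspondingly restricted. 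Third, the standard symplectic-field-theory dimension count for the zero-dimensional moduli space of cylinders from $\alpha_1^k$ to $\alpha_2^{\ell}$: for generic $J$ the indices of the pieces of any limiting building sum to at most $0$; since the single $X_1^2$-piece has non-negative index (it is regular when somewhere injective, and when it is an unbranched cover its index equals that of the corresponding cover of a simpler cylinder, which is again $\ge 0$) while each genuine symplectization cylinder contributes at least $2$, no symplectization cylinders can occur. Therefore $\mathbf{H}_k$ is a single cylinder $u_k$ in $X_1^2$ from $\alpha_1^k$ to $\alpha_2^{\ell}$, which by the first paragraph is regular.

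The step I expect to be the main obstacle is this last one --- excluding multi-level degenerations of $\mathbf{H}_k$. The delicate points are the bookkeeping of the intermediate orbits and, above all, the possibility that the $X_1^2$-piece $C_0$ (or a symplectization piece) is multiply covered, where the naive index count need not be sharp and one must argue through the structure of covers of index-zero cylinders in dimension four. That $k<b_1/a_1$ is genuinely needed here is shown by the discussion in Section \ref{results}: when it fails, as in Proposition \ref{nope}, further orbits ($\beta_1$-iterates, or higher $\alpha_1$-iterates) enter the action window $(ka_1,\ell a_2)$, genuine breaking along them becomes unavoidable, and no regular cylinder need exist. Once the building is known to be a single cylinder, genericity of $J$ is not even required for regularity; it enters only to make the auxiliary moduli spaces used in the degeneration analysis transversally cut out.
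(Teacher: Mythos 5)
Your overall strategy is the paper's: start from the cylindrical building of Theorem \ref{pardon}, use index positivity to rule out symplectization levels, and invoke Wendl for regularity. But the proposal leaves unproved exactly the step where the real work lies, and you flag it yourself without resolving it: the nonnegativity of the index of the cobordism-level curve when it is multiply covered. Saying that its ``index equals that of the corresponding cover of a simpler cylinder, which is again $\ge 0$'' is not an argument --- multiple covers of regular curves routinely have negative index, and a priori the underlying somewhere injective curve need not be a cylinder at all: it can have several negative ends, including ends asymptotic to iterates of $\beta_1$. This is precisely the content of Lemma \ref{positive} in the paper: one first applies Lemma \ref{most} to the upper levels to get $\mathrm{CZ}(\gamma)\le \mathrm{CZ}(\alpha_2^{\ell})$ for the positive asymptote $\gamma$, then uses the equality \eqref{lk} together with $k<b_1/a_1$ to show the underlying curve $v$ has no $\beta_1$-ends and that its negative multiplicities satisfy $pr_i^-<b_1/a_1$; only then do the floor terms vanish and one gets $\mathrm{index}(u)\ge p\,\mathrm{index}(v)\ge 0$. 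Relatedly, your ``action window'' argument is wrong as stated: nothing in the hypotheses forces $\ell a_2\le b_1$ (e.g.\ $a_1=1$, $b_1\approx 10.5$, $k=10$, $a_2=2.1$, $b_2=11$, $\ell=9$ gives $\ell a_2=18.9>b_1$), so $\beta_1$ and its iterates can lie in $(ka_1,\ell a_2)$; they are excluded by index, not by action. Finally, the opening assertion that a cylindrical building is a finite chain of cylinders is itself something to prove: a priori levels may contain branched covers of trivial cylinders with several ends, or curves with extra negative ends capped off below, and the paper obtains the chain structure only after showing every constituent curve has exactly one positive puncture and index zero (Lemmas \ref{oneup}, \ref{most}, \ref{positive} combined with total index $0$).

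The regularity step also has a gap. For an index-zero genus-zero cylinder both of whose asymptotic orbits have odd Conley--Zehnder index, Wendl's criterion reads $\mathrm{ind}(u)>c_N(u)+Z(du)$, i.e.\ $0>-1+Z(du)$, so it applies only if $u_k$ is \emph{immersed}; regularity is therefore not automatic for arbitrary admissible $J$, contrary to your closing claim that genericity is not needed at this stage. The paper needs a second genericity input, Theorem \ref{nocrit} (for generic $J$ every somewhere injective index-zero curve is immersed), together with a short floor computation showing that the underlying somewhere injective cylinder $v$ under $u_k$ also has index zero; then $v$ is immersed, the cover $u_k$ is unbranched (a cylinder over a cylinder) and hence immersed, and only at that point does Theorem 1 of \cite{we} give regularity of $u_k$. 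So genericity enters twice --- simple genericity for the index bookkeeping and the immersedness genericity for the last step --- and your proposal supplies neither the covering index estimate nor the immersedness argument.
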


Our proof of Theorem \ref{main} starts with the existence of holomorphic buildings, as in Theorem \ref{pardon}. For completeness, we include an alternative proof of Theorem \ref{pardon}, albeit under an additional assumption, see Theorem \ref{alt}. We hope this is of independent interest. The assumption of Theorem \ref{alt} is weaker than the hypotheses assumed in Theorem \ref{main}, so altogether we will give a self-contained proof of Theorem \ref{main}.

\subsection{Structure} The proof of Theorem \ref{main}, given Theorem \ref{pardon},  is presented in Section \ref{proof given Pardon}. In Section \ref{alternate}, we present our alternative proof of Theorem \ref{alt}.

\subsection{Acknowledgement.} The authors thank Dusa McDuff for reading an early draft and for many very useful conversations on these topics.


\section{Proof of Theorem \ref{main}}\label{proof given Pardon}

\subsection{The Plan}
Our proof of Theorem \ref{main} is divided into two propositions.

\begin{proposition}\label{I}
Let $J$ be an admissible almost complex structure on $(X_1^2, \omega)$ which is simply generic. Then there is a $J$-holomorphic cylinder $u_k$ from $\alpha_1^k$  to $\alpha_2^\ell$.
\end{proposition}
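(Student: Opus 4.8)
The plan is to start from the cylindrical holomorphic building $\mathbf{H}_k$ of Theorem~\ref{pardon} and use the two hypotheses of Theorem~\ref{main} to show that $\mathbf{H}_k$ is forced to be a single honest $J$-holomorphic cylinder in $X_1^2$; that cylinder will be $u_k$.

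\emph{Setup.} Since $\mathbf{H}_k$ is cylindrical it is a finite chain of holomorphic cylinders $C_1,\dots,C_N$: an initial segment lying in symplectization levels over $(\partial E_1,\lambda_1)$, a single component $C_{i_0}$ lying in the completed cobordism with one negative end on $\partial E_1$ and one positive end on $\partial E_2$, and the remaining components lying in symplectization levels over $(\partial E_2,\lambda_2)$; consecutive components share an asymptotic Reeb orbit, producing a chain $\gamma_0=\alpha_1^{k},\gamma_1,\dots,\gamma_N=\alpha_2^{\ell}$. Two elementary facts organize the proof: (a) each cylinder has nonnegative $d\lambda$-energy, vanishing precisely on trivial cylinders over orbits, and since the cobordism is exact the actions $\mathcal A(\gamma_i)$ are nondecreasing along the chain and strictly increasing across any nontrivial level; (b) by \eqref{czsmall} and $k<b_1/a_1$ the common Conley--Zehnder index of $\gamma_0$ and $\gamma_N$ equals $2k+1$, the smallest index carried by any orbit of that degree in either boundary.

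\emph{Pinning down the chain.} Using the Fredholm index formula for punctured cylinders in a four--manifold together with the explicit Conley--Zehnder indices of $\alpha_i^{m}$ and $\beta_i^{m}$, I would argue in two steps. First, no iterate of the long orbit $\beta_1$ can occur among $\gamma_0,\dots,\gamma_{i_0-1}$: such an iterate $\beta_1^{m}$ has action $mb_1>ka_1$ and a comparatively large index, and a symplectization cylinder joining an iterate of $\alpha_1$ of exponent $\le k$ to it would have to absorb index that the global budget $2k+1$ does not permit; symmetrically, the hypothesis $\gamma_N=\alpha_2^{\ell}$ forbids iterates of $\beta_2$ among $\gamma_{i_0},\dots,\gamma_N$. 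Hence every symplectization component joins iterates of a single simple orbit with non-decreasing exponent, so it has no index to spend and, $J$ being simply generic, must be a (possibly multiply covered) trivial cylinder. Removing the trivial levels leaves exactly the cobordism cylinder $C_{i_0}$ from $\alpha_1^{k}$ to $\alpha_2^{\ell}$, which we take to be $u_k$. (For Proposition~\ref{I} it is irrelevant whether $C_{i_0}$ is somewhere injective: it is in any case a $J$-holomorphic cylinder with the prescribed asymptotics; its regularity is taken up separately.)

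\emph{Main obstacle.} The crux is the index--action bookkeeping that excludes iterates of $\beta_1,\beta_2$ and all nontrivial symplectization levels from $\mathbf{H}_k$; this is exactly where both hypotheses enter, $k<b_1/a_1$ forcing $\alpha_1^{k}$ into the regime of minimal index at the bottom and the assumption on $\alpha_2^{\ell}$ doing the same at the top, and it reduces to a delicate comparison of Conley--Zehnder indices and $d\lambda$-areas of the pieces of the building. Given this, the reduction of $\mathbf{H}_k$ to $C_{i_0}$ is formal.
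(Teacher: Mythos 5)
Your overall strategy (take the building $\mathbf{H}_k$ from Theorem \ref{pardon} and collapse it by index--action bookkeeping) is the paper's strategy, but as written the proposal has two genuine gaps. First, the very first structural claim -- that a cylindrical building is ``a finite chain of holomorphic cylinders'' -- is not justified and is in fact part of what must be proved. A building whose glued domain is a cylinder can contain components with several negative punctures (e.g.\ pairs of pants, possibly branched covers of trivial cylinders) together with planar subbuildings capping the extra ends; nothing in the definition forbids this, and Stokes' theorem only rules out components with no positive puncture. In the paper this branching is excluded only at the end, as a consequence of the index count: one first shows each component has exactly one positive puncture (maximum principle/Stokes), that symplectization components have even index $\geq 0$, equal to $0$ only for covers of trivial cylinders, and that cobordism-level components have index $\geq 0$; since the total index is $0$, everything in the symplectizations is a trivial-cylinder cover and a single cylinder remains in $X_1^2$. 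By assuming the chain structure at the outset you are assuming away exactly the configurations the argument has to eliminate.

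Second, and more seriously, the step where you say the symplectization levels ``have no index to spend'' presupposes that the cobordism-level component has nonnegative index, and your parenthetical remark that it is ``irrelevant whether $C_{i_0}$ is somewhere injective'' is where this fails. Simple genericity gives $\mathrm{index}(v)\geq 0$ only for the \emph{somewhere injective} curve $v$ underlying the cobordism component $u$; a $p$-fold cover can a priori have negative index, because $\left\lfloor p x \right\rfloor$ is not $p\left\lfloor x\right\rfloor$ and, more importantly, because negative ends on iterates of $\beta_1$ contribute large floor terms. The entire content of the paper's Lemma \ref{positive} is to rule this out: using $\mathrm{CZ}(\gamma)\leq \mathrm{CZ}(\alpha_2^{\ell})$ for the positive asymptotic (monotonicity of action and CZ in the levels above), the hypotheses $k<b_1/a_1$ and $k=\ell+\left\lfloor \ell a_2/b_2\right\rfloor$ first force $v$ to have no negative ends on $\beta_1$, then force $\sum_i r_i^- \leq k/p$ so that all floors $\left\lfloor p r_i^- a_1/b_1\right\rfloor$ vanish, after which $\mathrm{index}(u)\geq 2p\bigl(r+\left\lfloor r a_2/b_2\right\rfloor - \sum_i r_i^-\bigr)\geq 0$. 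Your sketch gestures at an action/index comparison but never carries out this multiple-cover analysis, and without it the ``global index budget'' argument does not close: a negative-index cover in the cobordism level could be compensated by positive-index (hence nontrivial) symplectization levels, and the reduction to a single cylinder would fail. So the proposal needs both the a priori structure of the building re-derived from the index lemmas and, crucially, the Lemma-\ref{positive}-type argument for multiply covered cobordism components.
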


Given Proposition \ref{I} it remains to prove that we can also, generically, ensure that $u_k$ is regular. To achieve this we will use two results of Wendl from \cite{we}. The first of these is the following implication of Corollary 3.17 in \cite{we}.
\begin{theorem}\label{nocrit}(\cite{we})
For generic admissible $J$ on $(X_1^2, \omega)$, every somewhere injective $J$-holomorphic curve of index zero is immersed.
\end{theorem}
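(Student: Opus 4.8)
The plan is to obtain the statement from a Sard--Smale argument applied to a universal moduli space, refined by a stratification according to the order of vanishing of the differential; this is the four-dimensional mechanism underlying \cite[Corollary 3.17]{we}. Fix the homotopy class and the asymptotic Reeb orbits of the curves under consideration, let $\cJ$ denote the Banach manifold of admissible almost complex structures on $(X_1^2,\omega)$ (with the usual Floer-type perturbations of a fixed reference $J$), and let $\cM^{*}$ be the universal moduli space of pairs $(u,J)$ with $J\in\cJ$ and $u$ a somewhere injective $J$-holomorphic curve of index zero with the prescribed asymptotics. The punctured version of the standard transversality package --- linearized Cauchy--Riemann operator, Riemann--Roch, and Aronszajn unique continuation, as in Dragnev \cite{dr} --- shows that $\cM^{*}$ is a smooth Banach manifold and that the forgetful projection $\pi\colon\cM^{*}\to\cJ$ is Fredholm of index equal to the expected dimension $\mathrm{ind}(u)=0$.

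Next I would stratify $\cM^{*}$ by the critical locus of $du$. By the asymptotic analysis of finite-energy punctured curves (Hofer--Wysocki--Zehnder, Siefring), near each puncture $u$ is exponentially close to a trivial cylinder, so $du$ is nowhere zero there; hence for any fixed $u$ the critical points form a finite set disjoint from a neighborhood of the punctures. Given such a critical point $z_0$, I would adjoin it as a free marked point and impose the condition $du(z_0)=0$. The crucial point is that this condition cuts out a smooth Banach submanifold $\cM^{*}_{\mathrm{crit}}=\{(u,J,z_0)\colon du(z_0)=0\}$ of the one-pointed universal moduli space, with the projection $\pi_{\mathrm{crit}}\colon\cM^{*}_{\mathrm{crit}}\to\cJ$ Fredholm of index $\mathrm{ind}(u)-2=-2$. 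Heuristically: adding the marked point raises the virtual dimension by $2$, while in complex dimension two the vanishing of the complex-linear map $du(z_0)$ is four real conditions, a net codimension of two.

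With this in hand, Sard--Smale produces a residual set of $J\in\cJ$ that are simultaneously regular values of $\pi$ and of $\pi_{\mathrm{crit}}$; for such $J$ the fibre $\pi_{\mathrm{crit}}^{-1}(J)$ is a manifold of dimension $\mathrm{ind}(u)-2=-2$, hence empty, so every index-zero somewhere injective $J$-holomorphic curve with the fixed asymptotics has $du$ nowhere vanishing, i.e.\ is immersed (the asymptotic ends are automatically immersed by the first step). Taking a countable intersection over the discrete data --- homotopy class, asymptotic orbits, genus --- and then passing from the Banach manifold $\cJ$ to smooth admissible $J$ by the standard Taubes argument yields the statement for generic admissible $J$.

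The step I expect to be the main obstacle is the transversality of $\cM^{*}_{\mathrm{crit}}$, i.e.\ producing enough admissible perturbations of $J$ near the \emph{value} $u(z_0)$ to surject onto the relevant $1$-jet, without knowing a priori that $u(z_0)$ is an injective point of the curve. As in McDuff's proof that for generic $J$ a somewhere injective curve in a symplectic four-manifold is immersed, this is handled by perturbing in the $1$-jet of $J$ at $u(z_0)$ and using unique continuation for the linearized operator to prevent the perturbation from being absorbed elsewhere along the curve; this is precisely the technical heart of Section~3 of \cite{we}, and it is also the point at which dimension four enters, making the codimension bookkeeping an equality with nonnegative contributions.
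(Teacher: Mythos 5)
The paper does not actually prove this statement: Theorem \ref{nocrit} is quoted directly as an implication of Corollary 3.17 of \cite{we}, so there is no internal argument to compare yours against. Your sketch is essentially the standard mechanism underlying Wendl's result: a universal moduli space of somewhere injective punctured curves, a stratum carrying a free marked point $z_0$ with the condition $du(z_0)=0$, the codimension count (four real conditions minus the two dimensions of the marked point), and Sard--Smale to conclude that for generic $J$ the constrained stratum over an index-zero curve has negative dimension and is therefore empty; invoking the exponential convergence to trivial cylinders at the punctures to keep critical points in a compact region is also the right ingredient. The genuine mathematical content, as you yourself acknowledge, is the jet-transversality claim that the critical stratum $\{du(z_0)=0\}$ is cut out transversally using only admissible perturbations of $J$ along a curve that is merely somewhere injective; you flag this but do not prove it, and that step is exactly what Section 3 of \cite{we} supplies, so as a self-contained argument your proposal is incomplete at precisely the point you identify. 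One small correction: dimension four is not what makes this particular corollary work --- after adding the marked point, $du(z_0)=0$ has real codimension $2n-2\ge 2$ in any cobordism of dimension $2n\ge 4$ --- it is the automatic-transversality and adjunction-type aspects of \cite{we} (used elsewhere in this paper, e.g.\ for the regularity of the multiple cover $u_k$) that are genuinely four-dimensional, not the immersion statement itself.
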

With this we prove the following.

\begin{proposition}\label{II}
Suppose that $J$ is an admissible almost complex structure on $(X_1^2, \omega) $ which is simply generic and is also generic in the sense of Theorem \ref{nocrit}. Then the $J$-holomorphic cylinder $u_k$ in the statement of Proposition \ref{I} is regular.
\end{proposition}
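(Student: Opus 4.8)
The plan is to start from the cylinder $u_k$ produced by Proposition \ref{I} and upgrade it to a regular one using the two genericity results at our disposal: simple genericity (every somewhere injective genus-zero curve is regular) and Theorem \ref{nocrit} (every somewhere injective index-zero curve is immersed). The Fredholm index of a cylinder asymptotic to $\alpha_1^k$ and $\alpha_2^\ell$ should be computed first; using the index formula for punctured spheres in a four-dimensional cobordism together with the Conley–Zehnder data $\mathrm{CZ}(\alpha_1^k)=\mathrm{CZ}(\alpha_2^\ell)=2k+1$, one checks that $\mathrm{ind}(u_k)=0$. So $u_k$ is an index-zero curve, and the goal is to show it is regular, i.e. that the linearized Cauchy–Riemann operator at $u_k$ is surjective.

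The main point is to control the multiple-cover structure of $u_k$. Write $u_k = v \circ \varphi$ where $v$ is the underlying somewhere injective curve and $\varphi$ is a branched cover of the cylinder of degree $d\ge 1$. If $d=1$ then $u_k$ itself is somewhere injective, hence regular by simple genericity, and we are done. If $d\ge 2$, I would argue that the underlying curve $v$ is a cylinder as well: since $u_k$ has exactly two ends, asymptotic to $\alpha_1^k$ and $\alpha_2^\ell$ with $k$ and $\ell$ the relevant covering multiplicities, a degree-$d$ cover forces $d\mid k$ and $d\mid\ell$, and $v$ must be an embedded (or at least somewhere injective) cylinder from $\alpha_1^{k/d}$ to $\alpha_2^{\ell/d}$; a Riemann–Hurwitz count on the cylinder shows the covering $\varphi$ is in fact unbranched, so $\varphi$ is the standard $d$-fold cover $z\mapsto z^d$ of the cylinder. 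Here the hypothesis $k<b_1/a_1$ enters: it pins down $\mathrm{CZ}(\alpha_1^k)=2k+1$ via \eqref{czsmall} and, combined with the fact that exactly one iterate of $\alpha$ or $\beta$ has each odd index, it should prevent $v$ from being anything other than a cylinder between iterates of $\alpha_1$ and $\alpha_2$ (ruling out, e.g., underlying curves with ends on $\beta$). By Theorem \ref{nocrit}, for generic $J$ the index-zero somewhere injective curve $v$ is immersed; combined with simple genericity, $v$ is a regular immersed cylinder.

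The final step is the automatic-transversality / covering argument, which is Wendl's second ingredient (the other result from \cite{we} alluded to in Proposition \ref{II}'s preamble). In a four-dimensional symplectic cobordism, an immersed index-zero genus-zero curve and all of its multiple covers that again have index zero are automatically regular — this is Wendl's automatic transversality theorem, whose hypothesis is a bound relating the index, the genus, and the number of ends with odd-index asymptotics, and which is satisfied here because the normal Conley–Zehnder contributions are the right parity. One checks that the unbranched $d$-fold cover $u_k = v\circ\varphi$ still has index zero (the index is multiplicative for unbranched covers here since the normal bundle and its Conley–Zehnder data scale correctly), so automatic transversality applies directly to $u_k$ and yields surjectivity of the linearized operator. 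Hence $u_k$ is regular.

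The step I expect to be the main obstacle is the classification of the underlying somewhere injective curve $v$ and the proof that $\varphi$ is unbranched: one must genuinely use the asymptotic constraints, the index-zero condition, and the hypothesis $k<b_1/a_1$ together to exclude branched covers and covers of non-cylindrical curves, and care is needed because a priori the underlying curve of a cylinder need not be a cylinder. Everything after that — the index bookkeeping and the invocation of automatic transversality — is comparatively mechanical.
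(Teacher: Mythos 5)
Your route is the same as the paper's (underlying somewhere injective cylinder $v$, unbranched cover by Riemann--Hurwitz, Theorem \ref{nocrit} to get immersedness, then Wendl's automatic transversality, which indeed applies since every asymptotic orbit here has odd Conley--Zehnder index so the criterion $\mathrm{ind}>2g-2+\#\Gamma_0$ reads $0>-2$). But there is a genuine gap at the one step that is not mechanical: you never prove that $\mathrm{index}(v)=0$, which is exactly what you need in order to invoke Theorem \ref{nocrit}, and the justification you offer --- that ``the index is multiplicative for unbranched covers'' --- is false. Because of the floor terms in \eqref{standard}, $\mathrm{index}(u_k)$ is not $p\cdot\mathrm{index}(v)$ in general: if $v$ runs from $\alpha_1^{r}$ to $\alpha_2^{s}$ with $pr=k$, $ps=\ell$, then $\mathrm{index}(u_k)-p\,\mathrm{index}(v)=2\bigl(\lfloor ps\,a_2/b_2\rfloor-p\lfloor s\,a_2/b_2\rfloor\bigr)+2\bigl(p\lfloor r\,a_1/b_1\rfloor-\lfloor pr\,a_1/b_1\rfloor\bigr)$, and a priori an index-zero cover could sit over a curve of nonzero index.

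The paper closes this by a two-sided argument you are missing. First, the hypothesis $k<b_1/a_1$ kills the floor terms at the negative ends (since $r\le k$, both $\lfloor k a_1/b_1\rfloor$ and $\lfloor r a_1/b_1\rfloor$ vanish), and the superadditivity $\lfloor ps\,a_2/b_2\rfloor\ge p\lfloor s\,a_2/b_2\rfloor$ then gives $p\,\mathrm{index}(v)\le\mathrm{index}(u_k)=0$. Second, simple genericity makes the somewhere injective curve $v$ regular, hence $\mathrm{index}(v)\ge 0$; together these force $\mathrm{index}(v)=0$, after which Theorem \ref{nocrit} and automatic transversality run as you describe. Note also that you misplace where $k<b_1/a_1$ enters: it is not needed to rule out $\beta$-asymptotics for $v$ (the ends of $v$ are automatically the underlying orbits of the ends of the cylinder $u_k$, hence iterates of $\alpha_1$ and $\alpha_2$); its real role is precisely the index computation above.
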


Theorem \ref{main}  follows easily from Propositions \ref{I} and \ref{II}.

\subsection{Index formula}
We recall here the formula for the index of curves in an exact cobordism between ellipsoids. For two irrational ellipsoids $E_1$ and $E_2$ let $(X, \omega)$ be the completion of topologically trivial and exact
symplectic cobordism from the contact manifold $\partial E_1$ to the contact manifold $\partial E_2$. Let $J$ be an admissible  almost complex structure on $(X, \omega)$ and suppose that  $u$ is a $J$-holomorphic curve of genus zero such that  $u$ has
\begin{itemize}
\item $m^+$ positive punctures asymptotic to $\alpha_2$ with multiplicities $r^+_1, \dots , r^+_{m^+}$,
\item $n^+$ positive punctures asymptotic to $\beta_2$ with multiplicities $s^+_1, \dots , s^+_{n^-+}$,
\item $m^-$ negative punctures asymptotic to $\alpha_1$ with multiplicities $r^-_1, \dots , r^-_{m^-}$,
\item and $n^-$ negative punctures asymptotic to $\beta_1$ with multiplicities $s^-_1, \dots , s^-_{n^-}$.
\end{itemize}
The index of $u$ is then given by
\begin{dmath}\label{standard}
\mathrm{index}(u) = m^+ + n^+ + m^- + n^- -2 + \sum_1^{m^+} \mathrm{CZ}(\alpha_2^{r^+_i}) + \sum_1^{n^+} \mathrm{CZ}(\beta_2^{s^+_i})- \sum_1^{m^-} \mathrm{CZ}(\alpha_1^{r^-_i}) - \sum_1^{n^-} \mathrm{CZ}(\beta_1^{s^-_i})= 2\left[  m^+ + n^+ -1 + \sum_1^{m^+}\left(r^+_i + \left\lfloor \frac{r^+_ia_2}{b_2} \right\rfloor \right) + \sum_1^{n^+}\left(s^+_i + \left\lfloor \frac{s^+_ib_2}{a_2} \right\rfloor \right)-\sum_1^{m^-}\left(r^-_i + \left\lfloor \frac{r^-_ia_1}{b_1} \right\rfloor \right) -  \sum_1^{n^-}\left(s^-_i + \left\lfloor \frac{s^-_ib_1}{a_1} \right\rfloor \right)\right].
\end{dmath}

\subsection{Proof of Proposition \ref{I}}
Given an ellipsoid $E= E(a,b)$,  we first establish some simple properties of pseudoholomorphic  curves mapping to  the symplectization of its boundary,  $$(Y, d(e^{\tau} \lambda))=(\R \times \partial E(a,b), d(e^{\tau} \lambda(a,b))).$$
Recall that an almost complex structure $J$ on $Y$ is said to be $\lambda$-cylindrical if the restriction of $J$ to $\ker \lambda$ is compatible with $d\lambda$ and $J(\partial_{\tau})$ is equal to the Reeb vector field of $\lambda$.

\begin{lemma}\label{most}
Suppose that $J$ is a $\lambda$-cylindrical almost complex structure on $Y$ and that  $v$ is a $J$-holomorphic curve  with exactly one positive puncture. The index of $v$ is even and nonnegative,  and is at least two unless $v$  covers a trivial cylinder. Moreover,
if the positive puncture is asymptotic to the closed Reeb orbit $\eta$ of $\lambda$, then for any negative puncture of $v$ asymptotic to some $\gamma$, we have
\begin{equation*}
\mathrm{CZ}(\gamma) \leq \mathrm{CZ}(\eta).
\end{equation*}

\end{lemma}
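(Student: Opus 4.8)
\emph{The plan.} The whole lemma reduces to elementary properties of the counting function $\mathcal{N}(L) = \lfloor L/a\rfloor + \lfloor L/b\rfloor$ attached to $E(a,b)$. Every closed Reeb orbit $\gamma$ of $\lambda(a,b)$ is an iterate $\alpha^k$ of period $\mathcal{A}(\gamma)=ka$ or an iterate $\beta^k$ of period $\mathcal{A}(\gamma)=kb$, and since $\lfloor ka/a\rfloor=k$ and $\lfloor kb/b\rfloor=k$ the formula \eqref{czsmall} and its counterpart for $\beta$ can be written uniformly as
$$\mathrm{CZ}(\gamma) = 2\,\mathcal{N}(\mathcal{A}(\gamma)) + 1 .$$
The first thing I would record is that $\mathcal{N}$ is non-decreasing, is superadditive (because $x\mapsto\lfloor x\rfloor$ is), and satisfies $\mathcal{N}(L) < \mathcal{N}(\mathcal{A}(\gamma))$ for every $L < \mathcal{A}(\gamma)$, since $\mathcal{N}$ jumps up at every multiple of $a$ and of $b$, in particular at the period of every Reeb orbit.

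\emph{The index computation.} Let $v$ (connected, say) have genus $g$, its single positive puncture asymptotic to $\eta$, and negative punctures asymptotic to $\gamma_1,\dots,\gamma_m$ (repetitions allowed); finite energy forces this asymptotic behaviour, and the orbits are non-degenerate since $E(a,b)$ is irrational. As $Y=\R\times\partial E$ lies over $S^3$, on which the span of $\partial_\tau$ and the Reeb field, as well as $\ker\lambda$, are globally trivialized, the relevant relative first Chern number vanishes and the index formula for punctured curves in $Y$ --- the analogue of \eqref{standard} for the symplectization, with $2g-2$ in place of $-2$ --- gives
$$\mathrm{index}(v)\;=\;2g-1+m+\mathrm{CZ}(\eta)-\sum_{j=1}^m\mathrm{CZ}(\gamma_j)\;=\;2\Big(g+\mathcal{N}(\mathcal{A}(\eta))-\sum_{j=1}^m\mathcal{N}(\mathcal{A}(\gamma_j))\Big),$$
the second equality being the displayed identity for $\mathrm{CZ}$; in particular the index is even. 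Since $J$ is $\lambda$-cylindrical we have $v^*d\lambda\geq 0$ pointwise, so Stokes' theorem yields $\mathcal{A}(\eta)\geq\sum_j\mathcal{A}(\gamma_j)$, with equality exactly when the image of $v$ lies in a single orbit cylinder $\R\times\gamma_0$, in which case $v$ is a branched cover of it. Monotonicity and superadditivity of $\mathcal{N}$ then give
$$\mathcal{N}(\mathcal{A}(\eta))\;\geq\;\mathcal{N}\Big(\textstyle\sum_{j}\mathcal{A}(\gamma_j)\Big)\;\geq\;\sum_{j}\mathcal{N}(\mathcal{A}(\gamma_j)),$$
so $\mathrm{index}(v)\geq 2g\geq 0$. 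For the last assertion of the lemma, $\mathcal{A}(\gamma_j)\leq\sum_i\mathcal{A}(\gamma_i)\leq\mathcal{A}(\eta)$ and monotonicity of $\mathcal{N}$ give $\mathrm{CZ}(\gamma_j)=2\mathcal{N}(\mathcal{A}(\gamma_j))+1\leq 2\mathcal{N}(\mathcal{A}(\eta))+1=\mathrm{CZ}(\eta)$.

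\emph{The vanishing-index case, which is the only delicate point.} It remains to show that $\mathrm{index}(v)=0$ forces $v$ to cover a trivial cylinder; note that a positive-genus curve has $\mathrm{index}(v)\geq 2g\geq 2$ and is never such a cover, so this also yields the ``at least two'' claim. If $\mathrm{index}(v)=0$ then $g=0$ and both inequalities in the last display are equalities; combining $\mathcal{N}(\mathcal{A}(\eta))=\mathcal{N}(\sum_j\mathcal{A}(\gamma_j))$ with $\mathcal{A}(\eta)\geq\sum_j\mathcal{A}(\gamma_j)$ and the strict-increase property of $\mathcal{N}$ across $\mathcal{A}(\eta)$ forces $\mathcal{A}(\eta)=\sum_j\mathcal{A}(\gamma_j)$, hence $\int v^*d\lambda=0$ and $v$ is a branched cover of a trivial cylinder. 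This rigidity step is the one place needing care: it is precisely the jump of $\mathcal{N}$ at the period $\mathcal{A}(\eta)$ that rules out a genuine (non-cover) curve of index zero. Everything else is routine --- the standard asymptotic analysis and index formula for punctured curves, non-negativity of the $d\lambda$-energy together with the identification of its zero set, and the arithmetic of the floor function.
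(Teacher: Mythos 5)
Your proof is correct and takes essentially the same route as the paper's: the genus-zero index formula, Stokes' theorem applied to the pointwise nonnegative $d\lambda$-energy, and floor-function arithmetic, with your counting function $\mathcal{N}(L)=\lfloor L/a\rfloor+\lfloor L/b\rfloor$ merely packaging the paper's inequalities \eqref{part1}--\eqref{part2} uniformly so as to avoid the case split between $\alpha$- and $\beta$-asymptotics. The only added value is cosmetic: the superadditivity/monotonicity phrasing also covers positive genus and makes explicit the rigidity step (the jump of $\mathcal{N}$ at the period of $\eta$) that the paper treats in one line.
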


\begin{proof}
The positive puncture of $v$ is asymptotic to an iterate of either $\alpha$ or $\beta$. The proofs in both cases are essentially the same. So, we  assume that the positive puncture of $v$ is asymptotic to $\eta =\alpha^{r^+}$  for some $r^+$, and leave the remaining case to the reader.

Suppose that  the curve $v$ has  $m^-$ negative punctures asymptotic to $\alpha$ with multiplicities $r^-_1, \dots , r^-_{m^-}$, and $n^-$ negative punctures asymptotic to $\beta$ with multiplicities $s^-_1, \dots , s^-_{n^-}$. Denoting the domain of $v$ by $\mathcal{D}$, we have
\begin{equation}
\label{aa}
0\leq \int_{\mathcal{D}} v^*(d\lambda) = r^+a- \sum_1^{m^-} r^-_ia -\sum_1^{n^-} s^-_i b.
\end{equation}
From this we see that $r^+a$, the period of  $\eta$, is at least as large at the period of any closed Reeb orbit corresponding to a negative puncture of $v$. The second assertion above then follows from the fact that for irrational ellipsoids the orderings of closed Reeb orbits by period and by Conley-Zehnder index coincide.

To prove the first assertion we note that  \eqref{standard} implies that the index of $v$ is  given by
\begin{equation}
\label{eindex}
\mathrm{index}(v) =2\left( r^+ + \left\lfloor \frac{r^+a}{b} \right\rfloor  -\sum_1^{m^-}\left(r^-_i + \left\lfloor \frac{r^-_ia}{b} \right\rfloor \right) -  \sum_1^{n^-}\left(s^-_i + \left\lfloor \frac{s^-_ib}{a} \right\rfloor \right)\right).
\end{equation}
From \eqref{aa} we have
\begin{equation}
\label{part1}
r^+- \sum_1^{m^-} r^-_i -\sum_1^{n^-} \left\lfloor \frac{s^-_ib}{a} \right\rfloor \geq 0
\end{equation}
and
\begin{equation*}
\label{nofloor}
  \frac{r^+a}{b} - \sum_1^{m^-} \left\lfloor \frac{r^-_ia}{b} \right\rfloor -\sum_1^{n^-}  s^-_i \geq 0
\end{equation*}
which, in turn, implies that
\begin{equation}
\label{part2}
 \left\lfloor \frac{r^+a}{b} \right\rfloor - \sum_1^{m^-} \left\lfloor \frac{r^-_ia}{b} \right\rfloor -\sum_1^{n^-}  s^-_i \geq 0.
 \end{equation}
Together with \eqref{eindex}, inequalities \eqref{part1} and \eqref{part2} imply that $\mathrm{index}(v)$ is even and nonnegative. Finally we note that the index of $v$ is zero only if $ \int_{\mathcal{D}} v^*(d\lambda) =0$ in which case $v$ must cover the trivial cylinder over $\alpha$.
\end{proof}

%

Now, we return to the setting of Theorem \ref{main} where we have nested ellipsoids  $E_1=E(a_1, b_1)$ and $E_2=E(a_2, b_2)$, the symplectic completion $(X_1^2, \omega)$ of $E_2 \smallsetminus \mathring{E_1}$, and positive integers $k$ and $\ell$ such that
\begin{equation}\label{<b}
k<\frac{b_1}{a_1}
\end{equation}
and $\mathrm{CZ}(\alpha_1^k)=\mathrm{CZ}(\alpha_2^\ell)$. Condition \eqref{<b} implies that $\mathrm{CZ}(\alpha_1^k) =2k+1$ and $\mathrm{CZ}(\alpha_1^k)=\mathrm{CZ}(\alpha_2^\ell)$  if and only if
\begin{equation}
\label{lk}
k = \ell + \left\lfloor \frac{\ell a_2}{b_2} \right\rfloor.
\end{equation}

By Theorem \ref{pardon} (as well as Theorem \ref{alt} below) there is a cylindrical $J$-holomorphic building $\mathbf{ H} = \mathbf{ H}_k$ in $X_1^2$ of index zero which runs from  $\alpha_1^k$  to $\alpha_2^\ell$.
The curves of $\mathbf{H}$ map to either $Y_1$, $X_1^2$ or $Y_2$ where $Y_j$ is the symplectization of $ \partial E_j$. We now prove that the curves of $\mathbf{H}$ that map to these symplectizations must cover trivial cylinders. Hence $\mathbf{ H}$ has a unique curve mapping to $X_1^2$, the desired cylinder $u_k$.

\begin{lemma}\label{oneup}
Each curve of $\mathbf{ H}$  has exactly one positive puncture.
\end{lemma}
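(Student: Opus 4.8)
The plan is to argue by a global accounting of positive punctures across all levels of the building $\mathbf{H}$, combined with the fact that $\mathbf{H}$ has a single top positive end (asymptotic to $\alpha_2^\ell$) and a single bottom negative end (asymptotic to $\alpha_1^k$). First I would recall the basic combinatorics of a holomorphic building: at each interior level the negative punctures of the curves in the level above are matched bijectively with the positive punctures of the curves in the current level (together with the breaking orbits), so that the collection of positive punctures, counted with the Reeb orbits they are asymptotic to, can only decrease as one passes downward through the building. Concretely, if a curve had two or more positive punctures, the total number of positive ends would strictly increase somewhere, and there would have to be a compensating curve somewhere with no positive puncture at all; but a finite-energy $J$-holomorphic curve in a symplectization or in the cobordism $X_1^2$ (which is exact) has at least one positive puncture by positivity of $\int v^*(d\lambda)$ — the argument of \eqref{aa} shows the total action of the positive ends dominates that of the negative ends, so a curve with no positive puncture would have nonpositive $d\lambda$-energy and hence be constant, which is excluded for a genuine curve of a building. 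So no curve can be "created" to absorb extra positive ends.

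Next I would make this rigorous by an index/Euler-characteristic count. Since $\mathbf{H}$ is cylindrical its total domain is a cylinder, so the sum of the Euler characteristics of the components, corrected by the number of matched nodes, equals $0$; equivalently the building has exactly one input (positive) end and one output (negative) end at the extreme levels. Combining this with Lemma \ref{most}, every curve $v$ mapping to a symplectization $Y_1$ or $Y_2$ that has exactly one positive puncture has even, nonnegative index, $\geq 2$ unless it is a cover of a trivial cylinder; and the total index of the building is $0$. If some curve had $p \geq 2$ positive punctures, then reading the building from the top, the number of positive strands present at that stage would be $\geq 2$, and since the bottom has a single negative strand, somewhere below there must be a curve with fewer positive than negative punctures — in the extreme case a curve with one positive puncture and at least two negative ones is fine, but to reduce the strand count from $\geq 2$ back to $1$ one needs a curve whose number of positive punctures exceeds $1$... no: one needs instead a curve with $\geq 2$ positive punctures and $1$ negative puncture, i.e. a "pair of pants" oriented the wrong way, or a curve with no positive puncture. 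The former is ruled out by the action inequality \eqref{aa} only in symplectizations; in the cobordism level it is ruled out because $X_1^2$ is exact, so $\int u^* \omega$ equals the difference of actions and a multi-positive, single-negative curve would need the sum of top actions to be $\le$ the single bottom action, which forces all the floor terms and hence the index to be large and positive, contradicting $\mathrm{index}(\mathbf{H}) = 0$ together with nonnegativity of the indices of the remaining pieces (Lemma \ref{most}).

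The main obstacle, and where I would spend the most care, is the bookkeeping at the cobordism level $X_1^2$ itself: there is no $d\lambda$-energy inequality of the form \eqref{aa} there (only the exactness/area inequality, with the two different contact forms $\lambda_1$ and $\lambda_2$), so the argument that a curve in $X_1^2$ has exactly one positive puncture must instead use the index formula \eqref{standard} directly. The key point will be that \eqref{lk} together with $k < b_1/a_1$ makes $\mathrm{CZ}(\alpha_1^k)$ small, so that any curve in $X_1^2$ with two or more positive punctures, or with a positive puncture of multiplicity larger than forced, would have index strictly exceeding the index contribution it is allowed within a building of total index $0$ — here one uses that all other curves of the building have nonnegative index (Lemma \ref{most} for the symplectization pieces, and the same index computation for the unique cobordism piece). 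I expect the cleanest route is: (i) show the unique curve of $\mathbf{H}$ in $X_1^2$ has one positive puncture by this index estimate, then (ii) propagate downward and upward using Lemma \ref{most} and the action inequalities in the symplectizations to conclude every remaining curve has a single positive puncture as well.
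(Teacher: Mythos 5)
Your opening paragraph already contains the paper's entire proof, and had you trusted it the argument would be complete: no curve of $\mathbf{H}$ has zero positive punctures (the paper uses the maximum principle in $Y_1$, $Y_2$ and Stokes' theorem in $(X_1^2,\omega)$; your action-positivity version is the same point), and the cylindrical topology of the building then forces exactly one positive puncture per curve. The way to make the second step precise -- which you gesture at with the Euler characteristic remark but do not carry out -- is purely combinatorial: since the compactified curves of $\mathbf{H}$ glue to a cylinder (connected, genus zero, two ends), the dual graph of the building is a tree, so the number of matched pairs of punctures is $V-1$ where $V$ is the number of curves; hence the total number of positive punctures over all curves is $(V-1)+1=V$, and if every curve has at least one positive puncture then every curve has exactly one. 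No index estimates are needed, and in particular the numerical hypotheses $k<b_1/a_1$ and $\mathrm{CZ}(\alpha_1^k)=\mathrm{CZ}(\alpha_2^{\ell})$ play no role in this lemma.

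The route you actually propose in place of this has concrete errors. First, a curve with several positive punctures and one negative puncture is \emph{not} excluded by the action inequality \eqref{aa}, nor by exactness of $X_1^2$: Stokes' theorem gives that the total action of the positive ends is at least (not at most) that of the negative ends, so your claimed inequality is backwards, and such curves do exist; inside $\mathbf{H}$ they are excluded only by the genus-zero/tree count above combined with the nonexistence of curves without positive punctures (two upward strands from one curve would otherwise have to merge, creating genus, or be capped by a curve with no positive puncture). Second, the fallback index argument you propose at the cobordism level is circular relative to the paper's structure: nonnegativity of the index of the curve of $\mathbf{H}$ mapping to $X_1^2$ is exactly Lemma \ref{positive}, whose proof uses Lemma \ref{oneup} (together with the simply generic hypothesis and a multiple-cover analysis), so it cannot be invoked here; moreover Lemma \ref{most} gives index nonnegativity only for symplectization curves with a single positive puncture, which is again what is being proved. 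So, as written, the "rigorous" version of your argument has a genuine gap, even though the correct proof -- identical to the paper's -- is sketched in your first paragraph.
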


\begin{proof}
Curves with no positive punctures are precluded by the maximal principle in the symplectizations $Y_1$ and $Y_2$, and by Stokes' Theorem in  $(X_1^2, \omega)$. Curves with more than one positive punctures are precluded by the previous fact together with the fact that the building $\mathbf{ H}$ is cylindrical.
\end{proof}

\begin{lemma}\label{positive}
Each curve of $\mathbf{ H}$ with image in $X_1^2$ has a nonnegative index.
\end{lemma}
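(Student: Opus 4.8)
The plan is to reduce to a somewhere injective curve, where the hypothesis that $J$ is simply generic forces regularity, and then to control multiple covers by hand using the index formula \eqref{standard} and the condition $k<b_1/a_1$.

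So fix a curve $u$ of $\mathbf{H}$ with image in $X_1^2$ and let $\tilde u$ be the somewhere injective curve that it covers (with $\tilde u = u$ when $u$ is already simple). By Lemma \ref{oneup} the curve $u$ has exactly one positive puncture, and since $\mathbf{H}$ is cylindrical $u$ has genus zero; by Riemann--Hurwitz $\tilde u$ then also has genus zero, and, as the sign of each puncture of $u$ is inherited from the end of $\tilde u$ beneath it, $\tilde u$ has exactly one positive puncture too. Since $J$ is simply generic, $\tilde u$ is regular, so the moduli space of $J$-holomorphic curves in $X_1^2$ with the asymptotics of $\tilde u$ is a smooth manifold of dimension $\mathrm{index}(\tilde u)$ near $\tilde u$; it is nonempty, so $\mathrm{index}(\tilde u)\geq 0$. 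If $u$ is simple this is the lemma.

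Suppose then that $u = \tilde u\circ\varphi$ with $\varphi$ a holomorphic branched cover of genus zero surfaces of degree $d\geq 2$. Since $u$ has a single positive puncture, $\varphi$ is totally ramified there, so the positive end of $u$ is the $d$-th iterate of that of $\tilde u$; its negative ends are iterates of the negative ends of $\tilde u$, with ramification indices summing to $d$. Plugging this into \eqref{standard} and using the elementary estimates $d\lfloor x\rfloor\leq\lfloor dx\rfloor\leq d\lfloor x\rfloor+(d-1)$ at each puncture, one gets a lower bound for $\mathrm{index}(u)$ of the form $d\cdot\mathrm{index}(\tilde u)$ minus an error term built from the floor functions $\lfloor r a_1/b_1\rfloor$ associated to the $\alpha_1$-multiplicities occurring at the negative ends of $u$. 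The hypothesis $k<b_1/a_1$ is used to kill this error: applying Lemma \ref{most} to the part of $\mathbf{H}$ lying in the symplectization $Y_1$ below $u$ --- whose lowest orbit is $\alpha_1^k$, with $\mathrm{CZ}(\alpha_1^k)=2k+1$ --- bounds the relevant $\alpha_1$-multiplicities, and $k<b_1/a_1$ then makes the corresponding floor terms vanish (or small enough). Together with $\mathrm{index}(\tilde u)\geq 0$ this gives $\mathrm{index}(u)\geq 0$.

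The multiply covered case is the real difficulty: a degree $d$ branched cover need not have index at least $d$ times that of its underlying curve, because Conley--Zehnder indices are not additive under iteration of Reeb orbits, so regularity of $\tilde u$ is not enough on its own --- one genuinely needs both the numerical hypothesis $k<b_1/a_1$ and the position of $u$ within the building $\mathbf{H}$. No information about the symplectization levels is needed beyond what Lemma \ref{most} already provides.
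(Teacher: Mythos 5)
Your overall skeleton (pass to the underlying somewhere injective curve $v=\tilde u$, use the simply generic hypothesis to get $\mathrm{index}(\tilde u)\geq 0$, then control the multiple cover through the index formula \eqref{standard} and multiplicity bounds at the ends) is the same as the paper's, but the step where you produce the multiplicity bounds has a genuine gap, and it is exactly the step that carries the whole proof. You propose to bound the $\alpha_1$-multiplicities of the negative ends of $u$ by applying Lemma \ref{most} to the part of $\mathbf{H}$ lying in $Y_1$ \emph{below} $u$. But for a curve in a symplectization with one positive puncture, Lemma \ref{most} (and the underlying Stokes argument) says that the orbits at the \emph{negative} punctures have CZ index, equivalently action, at most that of the positive puncture. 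Applied to the sub-buildings hanging below $u$, whose positive ends match the negative ends of $u$ and whose only free negative end is $\alpha_1^k$, this yields a \emph{lower} bound on the negative asymptotics of $u$ (and yields nothing at all for negative ends of $u$ that are capped below by planar pieces, which are not excluded at this stage). It cannot give the upper bound $c_j<b_1/a_1$ you need to make the floor terms vanish, nor can it exclude negative ends at iterates of $\beta_1$, whose contributions $s^-_i+\lfloor s^-_i b_1/a_1\rfloor$ are the large ones and are not touched by your error estimate, which only mentions the $\lfloor r a_1/b_1\rfloor$ terms.

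What is missing is control of the \emph{positive} end of $u$ coming from the levels of $\mathbf{H}$ \emph{above} $u$: applying Lemma \ref{most} to those curves gives $\mathrm{CZ}(\gamma)\leq\mathrm{CZ}(\alpha_2^\ell)$ for the positive asymptotic $\gamma$ of $u$, hence $pr\leq\ell$ in the notation of the paper. The paper then combines this with the nonnegativity of $\mathrm{index}(v)$ (inequality \eqref{posreg}), the identity \eqref{lk} and the hypothesis \eqref{<b} to show, via \eqref{noroom}, first that $v$ has no negative ends on $\beta_1$ and then, by a separate contradiction argument, that $\sum r^-_i\leq k/p$, which is what makes all the floor terms at the negative ends of the cover vanish and gives $\mathrm{index}(u)\geq p\,\mathrm{index}(v)\geq 0$ (with a parallel Case 2 when $\gamma$ is an iterate of $\beta_2$, which your sketch does not treat). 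Note also that a purely action-theoretic bound from above (action of $\gamma$ at most $\ell a_2$) would not suffice, since $\ell a_2$ need not be smaller than $b_1$; the argument genuinely runs through the index inequality for $v$. So your closing assertion that no information from the symplectization levels is needed beyond Lemma \ref{most} is fine, but it must be applied to the levels above $u$, and fed into the index inequality for $v$, rather than to the levels below.
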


\begin{proof} Let $u$ be a curve of $\mathbf{ H}$ with image in $X_1^2$. By Lemma
 \ref{oneup},  $u$ has a single positive puncture  asymptotic to a closed Reeb orbit $\gamma$ of $\lambda_2$.
There are two cases to consider,  either $\gamma$ is an iteration of $\alpha_2$ or it is an iteration of $\beta_2$.  Applying the second assertion of Lemma \ref{most}  to the curves of $\mathbf{ H}$ with level above that of $u$, we see that in either case we have
 \begin{equation}
\label{less}
\mathrm{CZ}(\gamma) \leq \mathrm{CZ}(\alpha_2^\ell).
\end{equation}

 \medskip
 \noindent {\bf Case 1.} \emph{$\gamma$ is an iterate of $\alpha_2$.}
 \medskip

The curve $u$ is a (possibly trivial) multiple cover of  a somewhere injective $J$-holomorphic curve, $v$.
Denote the degree of the covering by $p \in \N$.  In the present case, the positive puncture of $v$ is asymptotic to  $\alpha_2^r$ for some $r \in \N$ and,  by \eqref{less}, we have
\begin{equation}
\label{pr}
pr \leq \ell.
\end{equation}
Since  $J$ is simply generic, by assumption, the index of curve  $v$ is nonnegative. If $v$ has $m^-$ negative punctures asymptotic to $\alpha_1$ with multiplicities $r^-_1, \dots , r^-_{m^-}$, and $n^-$ negative punctures asymptotic to $\beta_1$ with multiplicities $s^-_1, \dots , s^-_{n^-}$, this implies that
\begin{equation}\label{posreg}
 r + \left\lfloor \frac{ra_2}{b_2} \right\rfloor   -\sum_1^{m^-}\left(r^-_i + \left\lfloor \frac{r^-_ia_1}{b_1} \right\rfloor \right) -  \sum_1^{n^-}\left(s^-_i + \left\lfloor \frac{s^-_ib_1}{a_1} \right\rfloor \right)  \geq 0.
\end{equation}
Invoking hypotheses \eqref{<b} and \eqref{lk} together with \eqref{pr}
we also have
 \begin{equation}
\label{noroom}
r + \left\lfloor \frac{ra_2}{b_2} \right\rfloor \leq \ell + \left\lfloor \frac{\ell a_2}{b_2} \right\rfloor = k < \frac{b_1}{a_1}.
\end{equation}
It then follows from \eqref{posreg} and \eqref{noroom}, that  $v$ (and hence $u$) can have no negative punctures asymptotic to multiples of $\beta_1$. Thus, \eqref{posreg} simplifies to
\begin{equation}\label{goto}
  r + \left\lfloor \frac{ra_2}{b_2} \right\rfloor   -\sum_1^{m^-}\left(r^-_i + \left\lfloor \frac{r^-_ia_1}{b_1} \right\rfloor \right) \geq 0.
\end{equation}

Suppose that the sum  $\sum_1^{m^-}r^-_i$ was greater than $\frac{k}{p}$. Then  inequality  \eqref{goto} would imply
\begin{equation*}
r + \left\lfloor \frac{ra_2}{b_2} \right\rfloor >\frac{k}{p}
\end{equation*}
which, together with \eqref{pr}, would yield
\begin{equation*}
\ell + \left\lfloor \frac{\ell a_2}{b_2} \right\rfloor  \geq pr + \left\lfloor \frac{pr a_2}{b_2} \right\rfloor \geq p\left(r + \left\lfloor \frac{r a_2}{b_2} \right\rfloor\right) > k.
\end{equation*}
This would  contradict the index equality \eqref{lk}.  Hence, we must instead have
\begin{equation*}
\sum_1^{m^-}r^-_i  \leq \frac{k}{p}.
\end{equation*}
This implies that
\begin{equation}
\label{left}
pr_i^-<\frac{b_1}{a_1} \text{ for all } i=1, \dots, m^-
\end{equation}
and so \eqref{goto} simplifies to
\begin{equation}
\label{right}
 r + \left\lfloor \frac{ra_2}{b_2} \right\rfloor   -\sum_1^{m^-}r^-_i  \geq 0.
\end{equation}
Invoking inequalities \eqref{left} and \eqref{right}, in succession we then get
\begin{eqnarray*}
\mathrm{index}(u) & = & 2\left( pr + \left\lfloor \frac{pr a_2}{b_2} \right\rfloor   -\sum_1^{m^-}\left(pr^-_i + \left\lfloor \frac{pr^-_ia_1}{b_1} \right\rfloor \right) \right) \\
{} & = & 2\left( pr + \left\lfloor \frac{pra_2}{b_2} \right\rfloor   -\sum_1^{m^-} pr^-_i  \right) \\
{} & \geq & 2p\left( r + \left\lfloor \frac{ra_2}{b_2} \right\rfloor   -\sum_1^{m^-}r^-_i  \right) \\
{} & \geq & 0,
\end{eqnarray*}
as desired.

 \medskip
 \noindent {\bf Case 2.} \emph{$\gamma$ is an iterate of $\beta_2$.}
 \medskip

Define $p$ and $v$ as in the previous case. The positive puncture of $v$ is now asymptotic to $\beta_2^r$ for some $r \in \N$.  We label the negative punctures of $v$ as before. The fact that the index of $v$ is nonnegative now implies that
\begin{equation*}
 r + \left\lfloor \frac{rb_2}{a_2} \right\rfloor   -\sum_1^{m^-}\left(r^-_i + \left\lfloor \frac{r^-_ia_1}{b_1} \right\rfloor \right) -  \sum_1^{n^-}\left(s^-_i + \left\lfloor \frac{s^-_ib_1}{a_1} \right\rfloor \right) \geq 0.
\end{equation*}
the second assertion of Lemma \ref{most} implies that $\mathrm{CZ}(\beta_2^{pr}) \leq \mathrm{CZ}(\alpha_2^\ell)$  and hence
\begin{equation*}
pr + \left\lfloor \frac{prb_2}{a_2} \right\rfloor \leq \ell + \left\lfloor \frac{\ell a_2}{b_2}  \right\rfloor.
\end{equation*}
As in the previous case, this together with \eqref{<b} and \eqref{lk} precludes $v$ from having  negative punctures asymptotic to multiples of $\beta_1$. So,  we now have
\begin{equation}\label{go2}
  r + \left\lfloor \frac{rb_2}{a_2} \right\rfloor   -\sum_1^{m^-}\left(r^-_i + \left\lfloor \frac{r^-_ia_1}{b_1} \right\rfloor \right)   \geq 0.
\end{equation}

If the inequality  $\sum_1^{m^-} r^-_i > \frac{k}{p}$ held,  then \eqref{go2}  would yield
\begin{equation*}
 r + \left\lfloor \frac{rb_2}{a_2} \right\rfloor  > \frac{k}{p}.
\end{equation*}
and imply that
\begin{equation*}
\ell + \left\lfloor \frac{\ell a_2}{b_2} \right\rfloor  \geq pr + \left\lfloor \frac{pr b_2}{a_2} \right\rfloor \geq p\left(r + \left\lfloor \frac{r b_2}{a_2} \right\rfloor\right) > k.
\end{equation*}
Since this contradicts \eqref{lk}  we must have
$ \sum_1^{m^-} r^-_i \leq \frac{k}{p}$ instead. From this it follows that
\begin{equation*}
pr_i^-<\frac{b_1}{a_1} \text{ for all } i=1, \dots, m^-
\end{equation*}
and
\begin{equation*}
 r + \left\lfloor \frac{rb_2}{a_2} \right\rfloor   -\sum_1^{m^-}r^-_i  \geq 0.
\end{equation*}
Thus,
\begin{eqnarray*}
\mathrm{index}(u) & = & 2\left( pr + \left\lfloor \frac{prb_2}{a_2} \right\rfloor   -\sum_1^{m^-}\left(pr^-_i + \left\lfloor \frac{pr^-_ia_1}{b_1} \right\rfloor \right) \right) \\
{} & \geq & 2p\left( r + \left\lfloor \frac{rb_2}{a_2} \right\rfloor   -\sum_1^{m^-}r^-_i \right) \\
{} & \geq & 0,
\end{eqnarray*}
again as desired
\end{proof}

At this point we can complete the proof of Proposition \ref{I}.
The total index of all the curves of $\mathbf{H}$ is zero. It therefore follows from Lemma \ref{most}, Lemma \ref{oneup} and Lemma \ref{positive} that {\bf each} curve of $\mathbf{H}$ has index zero and those curves mapping to $Y_1$ or $Y_2$ cover trivial cylinders. The compactifications of all the curves of $\mathbf{H}$  glue together to yield a continuous map $\bar{\mathbf{H}}$ from the cylinder to $E_2 \smallsetminus  \mathring{E_1}$ that connects $\alpha_1^k$ to $\alpha_2^\ell$. Since the curves mapping to  $Y_1$ or $Y_2$ cover trivial cylinders, the compactifications of each of these curves map to single closed Reeb orbits. Hence, $\mathbf{H}$ can have only one curve with image in $X_1^2$, and this curve must have  exactly one negative and one positive end asymptotic to $\alpha_1^k$ and $\alpha_2^{\ell}$, respectively. This is the desired $J$-holomorphic cylinder $u_k$.

\subsection{Proof of Proposition \ref{II}}

Assume that $J$ is simply generic and generic in the sense of Theorem \ref{nocrit}. The curve $u_k$ from Proposition \ref{I} is the $p$-fold cover of a somewhere injective $J$-holomorphic cylinder, $v$, for some $p$ in $\N$.
\begin{lemma}
The index of $v$ is zero.
\end{lemma}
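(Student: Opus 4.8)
The plan is to show that the somewhere injective cylinder $v$ underlying $u_k$ has index zero by combining the index computation with simple genericity and an elementary number-theoretic estimate. First I would recall that $u_k$ is a $p$-fold cover of $v$, so the positive puncture of $v$ is asymptotic to $\alpha_2^r$ with $pr=\ell$, and similarly the unique negative puncture of $v$ is asymptotic to $\alpha_1^{r'}$ with $pr'=k$; here we already know from the proof of Proposition \ref{I} that $u_k$ (hence $v$) has no negative end on $\beta_1$. So $v$ is a somewhere injective curve with one positive end on $\alpha_2^r$ and one negative end on $\alpha_1^{r'}$, and by \eqref{standard} its index is
\begin{equation*}
\mathrm{index}(v) = 2\left( r + \left\lfloor \frac{ra_2}{b_2} \right\rfloor - r' - \left\lfloor \frac{r'a_1}{b_1} \right\rfloor \right).
\end{equation*}

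Next I would observe that $pr'=k<b_1/a_1$ forces $r'a_1/b_1<1/p\le 1$, so $\lfloor r'a_1/b_1\rfloor=0$ and the index of $v$ simplifies to $2(r+\lfloor ra_2/b_2\rfloor - r')$. Since $J$ is simply generic, $\mathrm{index}(v)\ge 0$, giving $r+\lfloor ra_2/b_2\rfloor \ge r'$. For the reverse inequality I would use that the total index of the building $\mathbf{H}$ is zero and that $\mathrm{index}(u_k)=0$ (established at the end of the proof of Proposition \ref{I}); then expand $\mathrm{index}(u_k)$ via \eqref{standard} as
\begin{equation*}
0 = \mathrm{index}(u_k) = 2\left( pr + \left\lfloor \frac{pra_2}{b_2} \right\rfloor - pr' \right) = 2\left( \ell + \left\lfloor \frac{\ell a_2}{b_2} \right\rfloor - k \right),
\end{equation*}
which is just the index identity \eqref{lk}, and compare it with $p$ times the index of $v$. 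Using the superadditivity $\lfloor pra_2/b_2\rfloor \ge p\lfloor ra_2/b_2\rfloor$ we get
\begin{equation*}
0 = \mathrm{index}(u_k) \ge p\cdot\mathrm{index}(v) \ge 0,
\end{equation*}
so $\mathrm{index}(v)=0$.

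The only subtlety — and the step I would flag as the main point to get right — is justifying that $v$ genuinely has a single negative puncture asymptotic to an iterate of $\alpha_1$ and nothing else, so that the two-puncture index formula above applies verbatim. This follows because $u_k$ is a cylinder from $\alpha_1^k$ to $\alpha_2^\ell$ with no $\beta_1$-ends, and any somewhere injective curve $v$ it covers must have its ends among the iterates of the orbits that $u_k$'s ends cover; since $u_k$ has exactly one negative end (on $\alpha_1$) and one positive end (on $\alpha_2$), $v$ can only have negative ends on $\alpha_1$ and a positive end on $\alpha_2$, and by Lemma \ref{oneup} it has exactly one positive puncture. A priori $v$ could have several negative punctures on $\alpha_1$, but then the relation between the multiplicities of $u_k$ and $v$ together with $pr'_i\le k<b_1/a_1$ still kills all the index floor terms on the negative side, and the superadditivity argument goes through with the sum $\sum r'_i$ in place of $r'$; the conclusion $\mathrm{index}(v)=0$ is unchanged. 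Everything else is routine manipulation of floor functions.
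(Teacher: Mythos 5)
Your proof is correct and follows essentially the same route as the paper: kill the $\lfloor \cdot \,a_1/b_1\rfloor$ terms using $k<b_1/a_1$, compare $\mathrm{index}(u_k)=0$ with $p\cdot\mathrm{index}(v)$ via the floor superadditivity $\lfloor pra_2/b_2\rfloor \ge p\lfloor ra_2/b_2\rfloor$, and invoke simple genericity for $\mathrm{index}(v)\ge 0$. Your closing worry about several negative punctures is moot (a punctured-sphere Euler characteristic count shows the underlying somewhere injective curve of a cylinder with one positive and one negative end is again a cylinder, as the paper asserts), but as you note the argument goes through regardless.
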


\begin{proof}
The curve $v$ is a cylinder from $\alpha_1^{r}$ to $\alpha_2^{s}$ where $rp=k$ and $sp =\ell$. Invoking inequality \eqref{<b} again we get
\begin{eqnarray*}
\mathrm{index}(u_k) - p(\mathrm{index}(v)) & = & 2\left( \ell + \left\lfloor \frac{\ell a_2}{b_2} \right\rfloor   -k - \left\lfloor \frac{k a_1}{b_1} \right\rfloor  \right)- 2p\left( s + \left\lfloor \frac{sa_2}{b_2} \right\rfloor   -r - \left\lfloor \frac{ra_1}{b_1} \right\rfloor  \right) \\
{} & = & 2\left(   \left\lfloor \frac{ ps a_2}{b_2} \right\rfloor  -  p\left\lfloor \frac{ s a_2}{b_2} \right\rfloor  \right) \\
{} & \geq & 0.
\end{eqnarray*}
It follows that  the index of $v$ is at most that of $u_k$, zero. On the other hand,
by our choice of $J$, $v$ is regular and hence has nonnegative index.
Thus,  the index of $v$ must be equal to zero.
\end{proof}

By  Theorem \ref{nocrit}, the curve $v$ must be immersed. From this, it follows that the original curve $u_k$ has no critical points since both $u_k$ and $v$ are cylinders. It then follows from Theorem 1 of \cite{we}, that $u_k$ is automatically regular for any admissible $J$ on $X_1^2$.
This completes the proof of Proposition \ref{II} and hence Theorem \ref{main}.


\section{Another Path to Holomorphic Buildings}\label{alternate}


In this section we present an alternative proof of Theorem \ref{pardon} under an additional assumption which is satisfied in the situation of Theorem \ref{main}. More precisely, we prove the following.


\begin{theorem}
\label{alt}
Given ellipsoids $E_1 \subset \mathring{E}_2$, suppose that $k\in \N$ satisfies
\begin{equation*}
\mathrm{CZ}(\alpha_1^k) = \mathrm{CZ}(\alpha_1^{k-1}) +2
\end{equation*}
and that
\begin{equation*}
\mathrm{CZ}(\alpha_2^{\ell}) = \mathrm{CZ}(\alpha_1^k) \text{ for some } \ell \in \N.
\end{equation*}
Then for any admissible almost complex structure $J$ on $(X_1^2, \omega)$ there exists a cylindrical  $J$-holomorphic building $\mathbf{ H}$  of index $0$ from $\alpha_1^k$ to $\alpha_2^{\ell}$.
\end{theorem}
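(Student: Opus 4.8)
The plan is to produce $\mathbf H$ by degenerating a single genuine holomorphic sphere under a neck-stretching, using only SFT compactness, the index formula \eqref{standard}, the estimates of Lemma \ref{most}, positivity of intersections in dimension four, and an enumerative input whose nonvanishing is elementary --- so that, in contrast with \cite{pa}, no virtual perturbation scheme is required. First I would invoke the ball-packing (weight expansion) picture of $E_1$: after a $C^0$-small rational perturbation of $E_1$ leaving $\mathrm{CZ}(\alpha_1^j)$ unchanged for $j\le k$, embed $E_1$ together with a disjoint union $\coprod_i B(w_i)$ of balls into a large ball $B(N)$, cap $B(N)$ off to $\CP^2(N)=\CP^2$ with a line of area $N$, and blow up the $B(w_i)$. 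In the blow-up $\widetilde M$ there is, for a suitable exceptional class $A=A(k)=d\,\ell_0-\sum_i m_i e_i$, an embedded rational curve $C$ with Gromov--Witten count $\pm 1$; the class $A(k)$ is chosen via the standard dictionary between $(d,m_i)$, the weight sequence of $E_1$, and covering multiplicities, so that degenerating $C$ along the boundary of the blow-up region produces a negative end asymptotic to $\alpha_1^k$. For generic admissible $J$ the space of such curves is a single transversely cut-out point.

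Next I would neck-stretch $C$ simultaneously along $\partial E_1$ and $\partial E_2$. SFT compactness yields a holomorphic building $\mathbf B$ with levels in the filling $\widehat E_1$ of $\partial E_1$, the symplectization $Y_1$, the cobordism $X_1^2$, the symplectization $Y_2$, and the cap $\widehat W_2$ (the completion of $\CP^2(N)\smallsetminus\mathring{E}_2$). The total index equals that of $C$, namely zero, and every level has nonnegative index (Lemma \ref{most} and genericity of $J$). This is where the two hypotheses enter. Since $\mathrm{CZ}(\alpha_1^k)=\mathrm{CZ}(\alpha_1^{k-1})+2$ and $\mathrm{CZ}(\alpha_2^\ell)=\mathrm{CZ}(\alpha_1^k)$, formulas \eqref{czsmall} and \eqref{standard} give floor-function inequalities of exactly the kind used in the proof of Proposition \ref{I}: they forbid any iterate of $\beta_1$ or $\beta_2$ from arising as a breaking orbit below the relevant action, and they leave --- together with Lemma \ref{most} and the embeddedness of $C$ --- only one index distribution, namely: every symplectization level is a union of trivial cylinders, $\widehat E_1$ carries a single index-zero plane asymptotic to $\alpha_1^k$, $\widehat W_2$ carries a single index-zero plane asymptotic to some $\alpha_2$-iterate $\gamma$, and $X_1^2$ carries a single index-zero cylinder from $\alpha_1^k$ to $\gamma$. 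But the index of that cylinder equals $\mathrm{CZ}(\gamma)-\mathrm{CZ}(\alpha_1^k)$, so $\mathrm{CZ}(\gamma)=\mathrm{CZ}(\alpha_1^k)$, and since exactly one Reeb iterate on $\partial E_2$ has that index, $\gamma=\alpha_2^\ell$. Deleting the two capping planes leaves a cylindrical building of index $0$ from $\alpha_1^k$ to $\alpha_2^\ell$. For an arbitrary admissible $J$, take $J_n\to J$ generic, apply this to each $J_n$, and pass to an SFT limit: it is again a cylindrical building of index $0$ from $\alpha_1^k$ to $\alpha_2^\ell$, and this is $\mathbf H$.

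Self-containedness requires arranging transversality without virtual tools. Every somewhere-injective curve occurring above is a sphere, plane or cylinder asymptotic only to $\alpha$-orbits, with all relevant Fredholm indices $\le 0$, so Wendl's automatic transversality criterion (Theorem 1 of \cite{we}) applies and genericity of $J$ suffices; the count is then a well-defined nonnegative integer, positive by the enumerative input. The main obstacle I anticipate is twofold. First, making the seed precise: pinning down the exceptional class $A(k)$ --- equivalently the degree $d$ and multiplicities $m_i$ --- that realizes the correspondence ``multiplicity $k$ of $\alpha_1 \leftrightarrow$ blow-up class'' and checking its Gromov--Witten count is nonzero for every $k$ meeting the hypotheses, including the irrational-to-rational approximation. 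Second, the combinatorial bookkeeping of the degeneration: one must enumerate all a priori buildings $\mathbf B$ and show that the two $\mathrm{CZ}$-conditions, together with Lemma \ref{most}, \eqref{standard} and positivity of intersections, kill every configuration except the cylindrical one --- in particular excluding extra nontrivial symplectization levels, multi-plane caps, $\beta$-breakings, and nodal or multiply-covered components that would spoil the cylinder. An alternative trading the enumerative step for more compactness work would take as seed the explicit $k$-fold branched cover of $\{z_2=0\}\cap(E_2\smallsetminus\mathring{E}_1)$ available when $\lfloor ka_2/b_2\rfloor=\lfloor ka_1/b_1\rfloor$ and run a cobordism-of-cobordisms compactness argument along a path of nested ellipsoid pairs; but the hypothesis set is disconnected, so one must then control the limits at the walls where a $\beta$-orbit enters.
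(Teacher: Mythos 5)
Your route (a closed seed curve in a blow-up of $\CP^2$ with nonzero count, a simultaneous neck-stretch along $\partial E_1$ and $\partial E_2$, and index bookkeeping) is genuinely different from the paper's proof, which never leaves the symplectization of $\partial E_1$: there the seed is a three-punctured cover of the trivial cylinder over $\alpha_1$ (positive ends $\alpha_1$, $\alpha_1^{k-1}$, negative end $\alpha_1^k$, two point constraints), made regular by \emph{domain-dependent} almost complex structures, then carried through a one-parameter deformation, stretched along $\Sigma_2=\partial E_2$, and only at the end is the domain dependence removed. As written, your proposal has a gap at its central step, namely the claim that in the stretched limit ``every level has nonnegative index'' and that the two Conley--Zehnder hypotheses leave only the cylindrical configuration. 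Genericity of $J$ gives nonnegative index only for somewhere injective curves; multiply covered components in $X_1^2$, in the filling of $\partial E_1$, and in the cap can have negative index and do occur in such limits, and Lemma \ref{most} controls only curves in the symplectization with a single positive puncture. The floor-function exclusions you invoke from Proposition \ref{I} (no $\beta_1$- or $\beta_2$-ends, nonnegativity of the index of covers in $X_1^2$) use the hypothesis $k<b_1/a_1$, hence $\mathrm{CZ}(\alpha_1^k)=2k+1$; Theorem \ref{alt} assumes only $\mathrm{CZ}(\alpha_1^k)=\mathrm{CZ}(\alpha_1^{k-1})+2$, which is strictly weaker, so that argument cannot be imported. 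Wendl's automatic transversality does not help here either, since it never applies to multiple covers; this is precisely why the paper introduces the families $J_z$ and the careful sub-building analysis (Lemmas \ref{s1}--\ref{Cinf}) instead of a pure index count.

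The enumerative seed is also not pinned down, and you rightly flag it, but it is the heart of the matter rather than a loose end: for every admissible $k$ you need an exceptional class $A(k)$ with nonzero count whose limit under stretching along $\partial E_1$ has a \emph{single} negative end on $\alpha_1^k$ rather than the several ends dictated by the weight sequence, and whose bottom level is an index-zero plane. With the balls placed disjoint from $E_1$, as in your setup, a plane in the unblown-up filling $\widehat E_1$ asymptotic to $\alpha_1^k$ has strictly positive index by \eqref{standard}, so the asserted ``unique index distribution'' is not internally consistent; repairing this forces the blow-ups inside $E_1$ via its weight expansion, after which the single-end statement is a substantial claim (of ECH or obstruction-bundle type), not a standard dictionary. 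Nothing in the argument forces the cap level to consist of one plane either. The final step (run the argument for generic $J_n\to J$ and pass to an SFT limit of index-zero cylindrical buildings) is fine and mirrors the paper's last step, but the compactness analysis preceding it would have to be rebuilt under the weaker hypotheses of Theorem \ref{alt}, essentially reproducing the paper's domain-dependent machinery or the constraints structure of its Proposition \ref{def}.
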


The compact cobordism $E_2 \smallsetminus \mathring{E_1}$ can be identified with a domain in the symplectization $(Y_1= \R \times \partial E_1, d(e^{\tau}\lambda_1))$ of the form
$$\{1 \le \tau \le h(z)\}$$
where $h: \partial E_1 \to \R_+$ is a (simple) smooth function. The hypersurface $\Sigma_2 = \{\tau = h(z)\}$ admits an open neighborhood $U_2 \subset Y_1$ which is symplectomorphic to a neighborhood of the form $\left((-\delta, +\delta) \times \partial E_2, d(e^{\tau}\lambda_2)\right) \subset Y_2$ for some $\delta>0$, where $Y_2$ is the symplectization of $\partial E_2$.

With these identifications in place, the structure of our proof of Thoerem  \ref{alt} can be described as follows. We start  by establishing the existence of a nontrivial compact class of curves in $Y_1$ which are pseudo-holomorphic with respect to a domain dependent family of almost complex structures. Then we split $Y_1$ along $\Sigma_2$ and obtain from our original class of curves, a pseudo-holomorphic building one of whose curves is a cylinder which maps to $X_1^2$ with the required asymptotics and is pseudo-holomorphic with respect to a domain dependent family of almost complex structures. This is the content of Proposition \ref{almost}. Finally we  consider a limit of such cylinders for a sequence of domain dependent families of almost complex structures that converges to a domain independent family.  This last limit gives the desired holomorphic building, $\mathbf{H}$.

\subsection{Almost complex structures}
Fix a $\tau_2>0$ such that the closure of the  neighborhood $U_2 \subset Y_1$ of the hypersurface $\Sigma_2$ is contained in $\{\tau \leq \tau_2\}=(-\infty, \tau_2) \times \partial E_1$.  Fix a $\lambda_1$-cylindrical almost complex structure $J_1$ on $Y_1$ and a $\lambda_2$-cylindrical almost complex structure $J_2$ on $Y_2$.

\begin{definition}
Let  $\mathcal{J}$ be the set of admissible almost complex structures $J$ on $Y_1$ for which there exists a compact subset $K \subset Y_1$ contained in $\{\tau \leq \tau_2\}$ such that
\begin{itemize}
  \item $J=J_1$ away from $K$.
  \item $J$ is compatible with an exact symplectic structure on $Y_1$ that  is equal to $d(e^{\tau}\lambda_1)$ away from $K$.
\end{itemize}
\end{definition}


\begin{definition}
Let  $\mathcal{J}_{\Sigma_2} \subset \mathcal{J}$ be the set of admissible almost complex structures $J$ on $Y_1$ for which there is a compact subset $K \subset Y_1$ that contains $U_2$ and is contained in $\{\tau \leq \tau_2\}$ such that
\begin{itemize}
  \item $J=J_1$ away from $K$ and is equal to $J_2$ in $U_2$, identifying $U_2$ with a subset of $Y_2$ as above.
  \item $J$ is compatible with an exact symplectic structure on $Y_1$ that  is equal to $d(e^{\tau}\lambda_1)$ away from $K$ and is equal to $d(e^{\tau}\lambda_2)$ on $U_2$.
\end{itemize}
\end{definition}


\begin{definition}
For $N \in [0, \infty)$, let $\mathcal{J}^N_{\Sigma_2}$ be the set of  almost complex structures on $Y_1$ obtained from those of $\mathcal{J}_{\Sigma_2}$ by the standard stretching procedure to a length $N$.
\end{definition}

\begin{definition}
Let $\mathcal{J}^N_{\{0,1,\infty\}}$ be the space of families of almost complex structures $J_{z \in \CP^1}$ on $ Y^N_1$ such that in disjoint open discs around $0$, $1$ and $\infty$ the complex structure $J_z$ only depends on the (local) angular coordinate .
\end{definition}

\subsection{Conventions} To simplify and clarify the arguments below we now state two conventions. The first concerns notation and the other concerns the meaning of term \textit{index}.

\medskip
\noindent \textbf{Convention 1.} Given a subset of domain independent almost complex structures $\widetilde{\mathcal{J}}$ on $Y^N_1$
the symbol $$\mathcal{J}^N_{\{0,1,\infty\}} \cap \widetilde{\mathcal{J}}$$ will be used to denote the set of families $J_z$ in  $\mathcal{J}^N_{\{0,1,\infty\}}$ such that $J_z$ belongs to $\widetilde{\mathcal{J}}$ for each $z \in \CP^1$.
\medskip

\medskip
\noindent \textbf{Convention 2.} Since we deal with domain dependent almost complex structures we must clarify what we will mean by the \textit{index} of a pseudo-holomorphic curve. Let $u$ be a pseudo-holomorphic curve in a topologically trivial and exact symplectic cobordism.  In what follows, by \textit{the index of $u$}, we will always mean the number given by the standard index formula \eqref{standard}.
This is a convenient formula because it is preserved under limits as in formulas \eqref{preserve1} and \eqref{preserve2}.
If $u$ is pseudo-holomorphic with respect to a domain independent  almost complex structure, then the index of $u$ is the virtual dimension of the moduli space of holomorphic curves containing $u$,  modulo biholomorphisms of the domain, as usual. If however  $u$  is pseudo-holomorphic with respect to a family of domain dependent  almost complex structures then the reparameterization group may have smaller dimension. In this case,  the dimension of the moduli space of holomorphic curves containing $u$ will typically exceed the index of $u$ by the defect in the dimension of the reparameterization group.
\medskip

\subsection{A moduli space of curves} Fix a point $p_1$ on the short closed Reeb orbit $\alpha_1$ on $\partial E_1$. Given an $N\geq 0$ and a  $J_z$ in $\mathcal{J}^N_{\{0,1,\infty\}} \cap \mathcal{J}$  let $\mathcal{M}^N(J_z)$ be the space of maps
\begin{equation*}
u \colon \CP^1 \smallsetminus \{0,1,\infty \} \to Y^N_1
\end{equation*}
which satisfy \footnote{Here it is understood that $\CP^1\smallsetminus \{0,1,\infty \}$ is equipped with the standard complex form $i$ inherited from $\CP^1$. }
\begin{equation*}
du(z) \circ i = J_z \circ du(z)
\end{equation*}
for all $z \in  \CP^1 \smallsetminus \{0,1,\infty \} =\C \smallsetminus \{0,1\} $, and have the following additional properties:

\medskip
\begin{enumerate}
  \item[(u1)] $u$ has a positive puncture at $0$ which is asymptotic to $\alpha_1$.
  \item[(u2)] $u(z)$ approaches $(+\infty, p_1)$ as $z$ approaches $0$ along the positive real axis.
  \item[(u2)] $u$ has a positive puncture at $1$ which is asymptotic to $\alpha_1^{k-1}$.
  \item[(u4)] $u$ has a negative  puncture at $\infty$ which is asymptotic to $\alpha_1^{k}$.
  \item[(u5)] $u(2)$ belongs to $\{\tau_2\} \times \partial E_1 \subset Y^N_1$.
  \end{enumerate}
\medskip

It follows from  the index formula \eqref{standard} and the hypothesis of Theorem \ref{alt} that each curve $u$ in $\mathcal{M}^N(J_z)$ has index equal two according to Convention 2. Taking the constraints (u2) and (u5) into consideration, it follows that each space $\mathcal{M}^N(J_z)$ has virtual dimension equal to zero.

\subsection{Seed curves}

Consider a family $J^0_z \in \mathcal{J}_{\{0,1,\infty\}} \cap \mathcal{J}$ whose elements are all $\lambda_1$-cylindrical.
\begin{lemma}\label{seed}
For a generic choice of $J^0_z$ the space $\mathcal{M}^0(J^0_z)$ contains exactly one curve and this curve is regular.
\end{lemma}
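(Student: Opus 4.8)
The plan is to exhibit an explicit model curve in $\mathcal{M}^0(J^0_z)$ and then argue uniqueness and regularity. First I would recall that in the symplectization $Y_1 = (\R \times \partial E_1, d(e^\tau \lambda_1))$ with a $\lambda_1$-cylindrical almost complex structure, the Reeb flow of $\lambda_1$ on $\partial E_1$ is the restriction of a Hamiltonian circle action, and one has very explicit holomorphic planes/cylinders over the Reeb orbit $\alpha_1$. The prototype is a branched cover of the trivial cylinder $\R \times \alpha_1$: a meromorphic-type map whose underlying curve is the trivial cylinder over $\alpha_1$ but which, as a map from $\CP^1 \smallsetminus \{0,1,\infty\}$, has a positive puncture at $0$ asymptotic to $\alpha_1$, a positive puncture at $1$ asymptotic to $\alpha_1^{k-1}$, and a negative puncture at $\infty$ asymptotic to $\alpha_1^{k}$. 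Such a curve has zero $d\lambda_1$-energy (it is a cover of a trivial cylinder), it satisfies (u1), (u2'), (u4), and the point constraints (u2) and (u5) can be arranged: (u2) is arranged by composing with the $\R$-translation/Reeb rotation symmetry of the trivial cylinder so that the ray from $0$ lands on $(+\infty, p_1)$, and (u5) is arranged because the trivial-cylinder cover takes all values of $\tau$, in particular $\tau_2$, so after a further $\R$-translation we can put $u(2) \in \{\tau_2\}\times \partial E_1$. This gives at least one element of $\mathcal{M}^0(J^0_z)$, and by the hypothesis $\mathrm{CZ}(\alpha_1^k) = \mathrm{CZ}(\alpha_1^{k-1}) + 2$ together with index formula \eqref{standard} its index is two, hence the constrained moduli space has virtual dimension zero.

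Next I would prove uniqueness. Any $u \in \mathcal{M}^0(J^0_z)$ has one negative puncture (asymptotic to $\alpha_1^k$) and two positive punctures (asymptotic to $\alpha_1$ and $\alpha_1^{k-1}$), so by Stokes' theorem applied to $\int_{\mathcal D} u^*(d\lambda_1)$ the total action of the positive ends, $a_1 + (k-1)a_1 = k a_1$, equals the action of the negative end $k a_1$, forcing $\int u^*(d\lambda_1) = 0$. Since $J^0_z$ is $\lambda_1$-cylindrical and $d\lambda_1$ is nonnegative on $J^0_z$-holomorphic curves, $u$ must have image contained in a single trivial cylinder $\R \times \gamma$; matching the asymptotics forces $\gamma = \alpha_1$ (up to iteration), so $u$ is a branched cover of $\R \times \alpha_1$. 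Branched covers of the trivial cylinder with these three prescribed punctures and multiplicities are classified by their branch data; the Riemann–Hurwitz count (equivalently the index-two computation) pins down the branching to a single simple branch point, and the position of that branch point together with the two point constraints (u2), (u5) rigidifies the map completely. Tracking through this, one sees there is a unique such $u$, and since the domain-dependent structure $J^0_z$ is $\lambda_1$-cylindrical everywhere (it lies in $\mathcal{J}$, so agrees with $J_1$ off a compact set but here is globally cylindrical), there is no room for the image to leave the trivial cylinder. Hence $\mathcal{M}^0(J^0_z)$ is a single point for \emph{every} such $J^0_z$, not just generically; genericity is then only needed for regularity.

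Finally, regularity. The curve $u$ is a branched cover of the trivial cylinder over $\alpha_1$, so it is Fredholm-regular precisely when the linearized operator — which splits, after the usual asymptotic analysis, into pieces over the asymptotic orbits $\alpha_1, \alpha_1^{k-1}, \alpha_1^k$ and a normal contribution governed by the Cauchy–Riemann operator on a line bundle over $\CP^1\smallsetminus\{0,1,\infty\}$ — is surjective. For a generic $\lambda_1$-cylindrical domain-dependent family $J^0_z$ in $\mathcal{J}_{\{0,1,\infty\}}\cap\mathcal{J}$, standard Sard–Smale / Taubes-type transversality arguments (the domain dependence near $0,1,\infty$ providing exactly the extra freedom needed to kill cokernel elements, compare Wendl's automatic transversality results used later in the paper) make this operator surjective, so $u$ is regular, and since the constrained moduli space has virtual dimension zero this is consistent with it being a single transversely cut-out point. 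The main obstacle I anticipate is the branched-cover transversality step: trivial-cylinder multiple covers are the classic source of non-regularity, so one must be careful that the point constraints (u2), (u5) and the domain dependence genuinely suffice to achieve transversality in this constrained, domain-dependent setting — this is where the bulk of the technical work lies, while the existence and uniqueness parts are essentially explicit.
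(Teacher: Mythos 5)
Your proposal is correct and follows essentially the same route as the paper: zero $d\lambda_1$-energy forces any curve in $\mathcal{M}^0(J^0_z)$ to cover the trivial cylinder over $\alpha_1$, the prescribed punctures and multiplicities determine the cover up to a residual $\C^*$ ambiguity which the constraints (u2) and (u5) resolve, and the genuine domain dependence of $J^0_z$ is what supplies transversality for this multiple cover (no tangential deformations, normal ones generically excluded). One small clarification: the branch point is not a free modulus --- with the domain $\CP^1\smallsetminus\{0,1,\infty\}$ fixed, the cover is $z\mapsto c\,z(z-1)^{k-1}$, so the only freedom is $c\in\C^*$, which is exactly what the two one-dimensional point constraints eliminate.
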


\begin{proof}
By (u1), (u2) and (u4), every  curve  in $\mathcal{M}^0(J^0_z)$ has zero $d\lambda_1$-area. Hence each curve in $\mathcal{M}^0(J^0_z)$ covers the trivial cylinder over $\alpha_1$. In fact, this covering map is uniquely determined by the conditions (u1)-(u5). The poles and zeros of the covering map are fixed by conditions (u1), (u3) and (u4). The remaining $\C^*$ ambiguity is then resolved by  conditions (u2) and (u5).
Thus,  the space $\mathcal{M}^0(J^0_z)$ contains a single curve. Choosing $J^0_z$ to be genuinely domain dependent, we may also ensure that this curve is regular. Indeed, there are no infinitesimal deformations tangent to the trivial cylinder, and deformations in the normal direction can be generically excluded using the domain dependence of $J^0_z$.
\end{proof}

\subsection{Deformation}
Let  $J_z$ be a family in $\mathcal{J}_{\{0,1,\infty\}} \cap \mathcal{J}^N$ such that $\mathcal{M}^{N}(J_z)$ is regular.
Consider a smooth path $$s \mapsto J^s_z \in\mathcal{J}_{\{0,1,\infty\}} \cap \mathcal{J}^{sN}$$ connecting the family $J^0_z$ from Lemma \ref{seed} to $J_z$.
\begin{proposition}\label{def}
For a generic choice of the path  $J_z^{s}$,  the space
\begin{equation*}
\{ (s,u) | u \in \mathcal{M}^{sN}(J_z^s)\}
\end{equation*}
is a compact $1$-dimensional manifold.
\end{proposition}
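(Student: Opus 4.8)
The plan is to treat $\{(s,u)\mid u\in\mathcal{M}^{sN}(J_z^s)\}$ as the zero set of a Fredholm section over a parametrized moduli problem and to verify the two ingredients of a standard cobordism argument: (i) transversality for a generic path, so that the parametrized moduli space is a manifold of the expected dimension, and (ii) compactness, so that it is actually a closed $1$-manifold. Since each $\mathcal{M}^{sN}(J_z^s)$ has virtual dimension zero (as noted after the definition of $\mathcal{M}^N(J_z)$, once the constraints (u2) and (u5) are imposed), the parametrized space has virtual dimension one. For (i), I would apply the usual Sard--Smale argument: the universal moduli space cut out by allowing $J_z^s$ to vary in the (Banach manifold completion of the) space of paths in $\mathcal{J}_{\{0,1,\infty\}}\cap\mathcal{J}^{sN}$ is a Banach manifold, because domain-dependence of the almost complex structures near generic points of the domain gives enough freedom to make the linearized operator (together with the evaluation constraints) surjective along any somewhere-injective curve; projecting to the path parameter and taking a regular value yields a generic path for which the parametrized moduli space is a smooth $1$-manifold. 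The endpoints are $s=0$, where by Lemma \ref{seed} the space $\mathcal{M}^0(J^0_z)$ is a single regular point, and $s=1$, where $\mathcal{M}^N(J_z)$ was assumed regular; these are the boundary points of the $1$-manifold, which is why the statement says \textbf{manifold} (with boundary) rather than closed manifold.

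The substantive point, and the one I expect to be the main obstacle, is compactness: I must rule out all forms of bubbling and breaking along the path, i.e.\ show no sequence $u_i\in\mathcal{M}^{s_iN}(J_z^{s_i})$ can escape to a nontrivial holomorphic building. Here the constraints and the index bookkeeping are doing all the work. First, the $d\lambda_1$-area of any curve in $\mathcal{M}^{sN}(J_z^s)$ is controlled a priori by the periods of the asymptotic orbits $\alpha_1$, $\alpha_1^{k-1}$, $\alpha_1^k$ (indeed the area is $(1)+(k-1)a_1-(k)a_1=0$ in the $d\lambda_1$ sense, so this class of curves has vanishing $d\lambda_1$-area), so the $\omega$-energy is uniformly bounded and SFT compactness applies. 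Second, a limiting building must have total index equal to $2$ (by Convention 2, which is preserved under SFT limits, cf.\ the forthcoming \eqref{preserve1}, \eqref{preserve2}), with one top positive puncture at $\alpha_1$, one at $\alpha_1^{k-1}$, and one bottom negative puncture at $\alpha_1^k$; the cylindrical structure together with Lemma \ref{oneup}-type reasoning (each component has exactly one positive puncture) and Lemma \ref{most} (index of each symplectization component is even, $\ge 0$, and $\ge 2$ unless the component is a trivial-cylinder cover, with Conley--Zehnder indices decreasing downward) forces all index to concentrate on a single nontrivial component and all others to be covers of trivial cylinders. The hypothesis $\mathrm{CZ}(\alpha_1^k)=\mathrm{CZ}(\alpha_1^{k-1})+2$ is exactly what prevents the index-$2$ component from splitting further; combined with the point constraints (u2) at $0$ and (u5) at $2$ — which pin down the residual reparametrization freedom and are inherited by the limit — no nodal degeneration is possible that keeps the three marked points on distinct components with the right asymptotics. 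Hence the limit is again an honest curve in some $\mathcal{M}^{s_\infty N}(J_z^{s_\infty})$, and the parametrized moduli space is compact.

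Putting these together: by the transversality step the set in question is a smooth $1$-manifold with boundary lying over $\{0,1\}$, and by the compactness step it is compact, hence a compact $1$-manifold. I would present the argument in this order — dimension count, transversality via domain-dependence and Sard--Smale, then the compactness analysis via SFT plus the index/Conley--Zehnder constraints — emphasizing that the only delicate part is excluding degenerations, and that this exclusion is an application of Lemmas \ref{most} and \ref{oneup} together with the arithmetic hypotheses of Theorem \ref{alt}, just as in the proof of Proposition \ref{I}.
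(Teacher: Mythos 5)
Your transversality step is fine and is essentially what the paper does (it is dispatched as ``standard arguments,'' using the regularity of the seed space $\mathcal{M}^0(J^0_z)$ and the freedom coming from domain dependence). The gap is in the compactness step, which is precisely where the real work lies, and your argument there rests on premises that fail in this setting. First, the curves in $\mathcal{M}^{sN}(J^s_z)$ are three-punctured spheres with \emph{two} positive punctures, so the limit building is not cylindrical and ``Lemma \ref{oneup}-type reasoning (each component has exactly one positive puncture)'' is simply unavailable; the correct statement is Lemma \ref{twoup} (at least one and at most two positive punctures), and the paper must separately control subbuildings with two positive ends (this is the content of Lemmas \ref{C1} and \ref{C0}, where such cylindrical subbuildings are shown to cost at least six units of index). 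Second, Lemma \ref{most} only applies to curves in levels carrying a $\lambda_1$-cylindrical structure. The level carrying the deformation (the $\tau_2$-level, where $J^s_z$ is neither cylindrical nor domain independent) can support curves of \emph{negative} index: because $J_z$ depends only on $\arg z$ near $0,1,\infty$, the limit curves there (Lemma \ref{s1}) have only a one-dimensional reparameterization group, so in a generic one-parameter family one cannot exclude curves of conventional index $-2$, such as the curve $\mathbf{C}_\infty$ of Lemma \ref{Cinf}. Hence ``all index concentrates on a single nontrivial component and all others are trivial-cylinder covers'' does not follow from Lemma \ref{most}; ruling out the low-index configurations requires the paper's case analysis on whether the distinguished limit curve $\mathbf{C}$ lies at or above the $\tau_2$-level, the count of constraints (punctures at $0,1,\infty$ together with (u2) and (u5) give a $7$-dimensional constraint family against the virtual dimension bounds), and, in the residual case where $\mathbf{C}$ is forced to be a cover of a trivial cylinder above the $\tau_2$-level, a generic asymptotic-matching argument comparing the limit point at $\infty$ pinned by (u2) with the finitely many asymptotic limits realized by the buildings $\mathcal{C}_\infty$. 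None of these mechanisms appear in your proposal, and the sentence ``no nodal degeneration is possible that keeps the three marked points on distinct components'' is an assertion, not an argument.

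Two smaller corrections. The hypothesis $\mathrm{CZ}(\alpha_1^k)=\mathrm{CZ}(\alpha_1^{k-1})+2$ is used only to make the unconstrained index of curves in $\mathcal{M}^N(J_z)$ equal to $2$ (so that the constrained virtual dimension is $0$); it does not by itself ``prevent the index-$2$ component from splitting.'' And your claim that the curves have vanishing $d\lambda_1$-area is false for the deformed, non-cylindrical structures in $\mathcal{J}^{sN}$ (a nonconstant curve cannot have zero area for a taming exact form); what one actually has is a uniform energy bound coming from exactness and the fixed asymptotics, which is what licenses the application of the compactness theorem of \cite{behwz}.
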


Since the space  $\mathcal{M}^0(J^0_z)$ is regular,  the proof of the relevant transversality assertion follows from standard arguments.  Turning then to compactness, we consider a sequence $(s_n, u_n)$ in the space $\{ (s,u) | u \in \mathcal{M}^{sN}(J_z^s)\}$ such that the $s_n$ converge to some $\hat{s}>0$ and the $u_n$ converges in the sense of \cite{behwz} to a holomorphic building $\mathbf{F}$. To prove Proposition \ref{def} we must show that all but one of the curves of $\mathbf{ F}$ are trivial cylinders and the remaining (nontrivial) curve belongs to $\mathcal{M}^{\hat{s}N}(J_z^{\hat{s}})$.

We begin our analysis of the curves of  $\mathbf{ F}$ with two simple observations.
First we mention the following result whose proof is analogous to that of Lemma \ref{oneup}.
\begin{lemma}
\label{twoup}
Each curve of $\mathbf{F}$ has at least one and at most two positive punctures.
\end{lemma}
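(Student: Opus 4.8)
The plan is to prove Lemma \ref{twoup} by an argument parallel to that of Lemma \ref{oneup}, exploiting the exactness of the cobordism and the combinatorial structure of the domain. First I would recall that each curve of the building $\mathbf{F}$ arises as a component of the stable curve obtained by splitting the once-punctured-thrice domain $\CP^1\smallsetminus\{0,1,\infty\}$, so its domain is a sphere with finitely many punctures, some of which are the \emph{nodes} where it attaches to adjacent levels of $\mathbf{F}$, and at most three of which are the \emph{original} marked punctures $0,1,\infty$. At a node, one side is a positive puncture and the other a negative puncture (asymptotic to the same Reeb orbit), and the original punctures $0$ and $1$ are positive while $\infty$ is negative. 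Thus the total number of positive punctures of $\mathbf{F}$ summed over all curves equals (number of positive original punctures) $+$ (number of nodes) $= 2 + (\text{\# nodes})$, and likewise for negatives we get $1 + (\text{\# nodes})$.

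The lower bound \emph{at least one} positive puncture on each curve follows exactly as in Lemma \ref{oneup}: a curve with no positive puncture in the symplectization $Y_1$ (or in $X_1^2$ after splitting along $\Sigma_2$) is ruled out by the maximum principle, respectively by Stokes' theorem together with exactness of the symplectic form, since the $d\lambda$-area (or $\omega$-energy) of such a curve would be nonpositive yet must be nonnegative, forcing it to be a constant, which is not allowed in a stable building. For the upper bound \emph{at most two} positive punctures, the key point is that the domain, being a \emph{planar} (genus zero, cylindrical-type) building, is built by gluing spheres along a \emph{tree} of nodes: cutting any edge of the tree separates the domain into two pieces, and each original positive puncture $0,1$ lies in exactly one piece on each side. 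Hence if some curve $v$ of $\mathbf{F}$ had three or more positive punctures, then removing $v$ from the tree would produce at least three connected components each of which — by the lower bound just established, applied inductively to the sub-building it supports — must "use up" at least one of the original positive punctures $0$ or $1$ as the source of its positive flux; but there are only two such punctures, a contradiction.

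Let me make that last step precise since it is the crux. Orient the tree of nodes by declaring, for each edge, the side containing the negative end of the building (the level mapping to $Y_1$ far below, asymptotic to $\alpha_1^k$ at $\infty$) to be "below." For a curve $v$ of $\mathbf{F}$ with positive punctures $q_1,\dots,q_m$, each $q_j$ is either one of $0,1$ or is a node connecting $v$ to a curve above it; in the latter case the entire sub-building $\mathbf{F}_j$ sitting above that node must, by the maximum principle / Stokes argument, contain at least one original positive puncture (else its total energy is $\le 0$). Since the sub-buildings $\mathbf{F}_j$ over distinct positive punctures of $v$ are disjoint and each original positive puncture lies in at most one of them, we get $m \le 2$. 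I expect the main obstacle to be bookkeeping the combinatorics of which punctures are "original" versus "nodal" cleanly, and in particular making sure the maximum-principle argument applies uniformly across curves in $Y_1$, in $X_1^2$, and in $Y_2$ (after the splitting along $\Sigma_2$ is performed); but in each of these the ambient symplectic form is exact on the relevant region, so Stokes' theorem gives the needed energy inequality in every case, and no new idea beyond Lemma \ref{oneup} is required.
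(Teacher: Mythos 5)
Your proof is correct and follows essentially the same route as the paper: the paper simply notes that the proof is analogous to Lemma \ref{oneup}, i.e.\ curves with no positive puncture are excluded by the maximum principle and Stokes' theorem in the exact levels, and curves with three or more positive punctures are excluded by that fact combined with the fact that the building $\mathbf{F}$ is a genus zero building with only the two positive ends at $0$ and $1$. Your tree-of-nodes bookkeeping is just an explicit write-up of that same argument, so no further comment is needed.
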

Next we note that the compactness results of \cite{behwz} imply that curves of $\mathbf{ F}$ mapping to $Y_1^{\hat{s}N}$ all lie in the same level of the building $\mathbf{F}$. We will henceforth refer to this as the \textit{$\tau_2$-level} since one of the curves at this level must satisfy constraint (u5). The remaining curves map to $Y_1$.

\medskip



Now, away from $\{0,1, \infty\}$ and a finite (possibly empty) set of additional punctures, $\{z_1, \dots, z_L\}$,  the curves $u_n$ converge in the $C^{\infty}_{\mathrm{loc}}$-topology (possibly up to translation) to a unique curve $\mathbf{C}$ of $\mathbf{ F}$. Eventually, we will show that $\mathbf{C}$ is the only nontrivial curve of $\mathbf{F}$. To this end we use $\mathbf{C}$ to organize our analysis of the other curves.
By the convergence theorem, the curves of $\mathbf{ F}$ other than $\mathbf{ C}$ can be sorted into disjoint subsets indexed by the punctures of $\mathbf{C}$. Denote by  $\eta_0$, $\eta_1$, $\eta_{\infty}$ and $\eta_{z_j}$ the closed Reeb orbits of $\lambda_1$ corresponding to the punctures of $\mathbf{C}$ at $0,1, \infty$, and $z_j$, respectively. We then define $\mathcal{C}_0$, $\mathcal{C}_1$ and $\mathcal{C}_{\infty}$ to be the cylindrical holomorphic subbuildings of $\mathbf{ F}$ asymptotic to $\eta_0$, $\eta_1$ and $\eta_{\infty}$ respectively at one end, and $\alpha_1$, $\alpha_1^{k-1}$ and $\alpha_1^k$ at the other end. Similarly $\mathcal{D}_j$ is the collection of curves which fit together to form a planar component asymptotic to $\eta_{z_j}$. By Lemma \ref{twoup} we see that $\eta_{z_j}$ must be a positive puncture for the matching curve in $\mathcal{D}_j$, and hence $z_j$ is a negative puncture for $\mathbf{C}$.

Our definition of the space $\mathcal{J}^N_{\{0,1,\infty\}}$, in particular the prescribed nature of the domain dependence near the punctures,  implies the following.

\begin{lemma}\label{s1}
Suppose that  $C$ is a curve in $\mathcal{C}_0$, $\mathcal{C}_1$ or $\mathcal{C}_{\infty}$ which maps to the $\tau_2$-level and is not part of a planar component in the complement of $\mathbf{ C}$. Then $C$ has two distinguished punctures which do not match with planar components, say at $0$ and $\infty$. Let $\Gamma$ denote the remaining punctures. The corresponding map with domain $\C^* \setminus \Gamma$ is then pseudo-holomorphic with respect to a domain dependent family of almost complex structures that  only depend on $\arg z$.
\end{lemma}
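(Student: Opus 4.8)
The claim is essentially a bookkeeping statement about how the domain-dependence built into $\mathcal{J}^N_{\{0,1,\infty\}}$ is inherited by the curves of the limit building $\mathbf{F}$. I would prove it by tracking, under the convergence $u_n \to \mathbf{F}$, exactly which points of the domains $\CP^1\smallsetminus\{0,1,\infty\}$ end up mapping near the curve $C$. First I would set up the following dichotomy for a curve $C$ in one of the subbuildings $\mathcal{C}_0$, $\mathcal{C}_1$, $\mathcal{C}_\infty$: since $\mathcal{C}_0$ (say) is a cylindrical subbuilding with one end asymptotic to $\eta_0$ (the Reeb orbit at the $0$-puncture of $\mathbf{C}$) and the other end asymptotic to $\alpha_1$, every curve $C$ of this subbuilding carries, in the limit, a distinguished ``top'' end and a distinguished ``bottom'' end — the two ends that lie on the path in the cylindrical building from $\eta_0$ to $\alpha_1$ — while any further punctures of $C$ match with planar components $\mathcal{D}_j$ hanging off. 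These two distinguished ends are exactly the punctures I will call $0$ and $\infty$ in the conclusion.

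\medskip

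\textbf{Key steps.} (1) Using the $C^\infty_{\mathrm{loc}}$-convergence away from $\{0,1,\infty,z_1,\dots,z_L\}$ together with the BEHWZ soft-rescaling/bubbling analysis, identify for each such $C$ the sequence of domains: near the puncture $0$ of $C$ the preimages under $u_n$ of a neighborhood of $\eta_0\subset Y_1^{\hat sN}$ are small annuli shrinking onto one of $\{0,1,\infty\}$ or onto one of the nodal points $z_j$ of $\mathbf{C}$; the point of the first convention defining $\mathcal{J}^N_{\{0,1,\infty\}}$ is precisely that $J_z$ depends only on $\arg z$ in a fixed disc around each of $0,1,\infty$. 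So if the distinguished puncture $0$ of $C$ sits over $0$, $1$, or $\infty$ (rather than over some $z_j$), then on the shrinking annuli the almost complex structure is already angular, and rescaling to the cylindrical limit preserves this: a family depending only on $\arg z$ on an annulus rescales to a family depending only on $\arg z$ on the limit cylinder. (2) For a puncture sitting over a nodal point $z_j$ of $\mathbf{C}$ there is no prescribed angular dependence, but $z_j$ is, by the discussion preceding the lemma and by Lemma~\ref{twoup}, a \emph{negative} puncture of $\mathbf{C}$ and a \emph{positive} end of the planar component $\mathcal{D}_j$ — in particular such a puncture is exactly one of the punctures in $\Gamma$, not one of the two distinguished ones; this is where the hypothesis ``$C$ is not part of a planar component in the complement of $\mathbf{C}$'' is used, to rule out $C$ itself being absorbed into some $\mathcal{D}_j$. (3) Conclude that the two distinguished punctures of $C$ (after relabeling as $0,\infty$) are the ones sitting over two of $\{0,1,\infty\}$ and hence carry angular domain dependence, while the remaining punctures $\Gamma$ may be arbitrary; restricting $C$ to $\C^*\smallsetminus\Gamma$ gives the asserted pseudo-holomorphic map for an $\arg z$-dependent family.

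\medskip

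\textbf{Main obstacle.} The genuinely delicate point is Step~(1): verifying that the prescribed angular dependence on a \emph{fixed-size} disc around $0$, $1$, $\infty$ in the source $\CP^1$ really does translate, after the neck-stretching rescaling that produces the cylinder $C$, into angular dependence on the \emph{entire} limit cylinder $\C^*$. One must check that the biholomorphic reparametrizations used in the BEHWZ compactness (the rescalings centered at the bubble points) are rotations composed with dilations near those three punctures — which holds because the bubbling is centered at $0,1,\infty$ themselves and the dilations preserve $\arg z$ — so that the pulled-back $J_z$ retains its $\arg z$-only dependence in the limit. Once this compatibility of ``angular near a puncture in $\CP^1$'' with ``angular on the whole cylinder'' is nailed down, the rest is the combinatorial sorting of punctures already carried out in the paragraphs preceding the lemma, and the proof is short.
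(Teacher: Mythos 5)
Your proof is correct and is essentially the argument the paper intends: Lemma \ref{s1} is asserted there without a written proof, as a direct consequence of the prescribed angular dependence of families in $\mathcal{J}^N_{\{0,1,\infty\}}$ near $0$, $1$, $\infty$, and your key step --- that the neck rescalings at a puncture are dilations composed with rotations, so a family depending only on $\arg z$ on a fixed disc around the puncture limits to a family depending only on $\arg z$ on the whole cylinder $\C^*\setminus\Gamma$, with the two chain ends of the cylindrical subbuilding landing at $0$ and $\infty$ --- is precisely that mechanism. (Two harmless bookkeeping slips: the extra punctures $\Gamma$ of a curve $C$ in $\mathcal{C}_0$ are capped by planar subbuildings of $\mathcal{C}_0$ itself, not by the components $\mathcal{D}_j$ indexed by the punctures $z_j$ of $\mathbf{C}$; and both distinguished punctures of $C$ arise as the two ends of the neck at the single element of $\{0,1,\infty\}$ over which $C$ bubbles off, rather than sitting over two different elements of $\{0,1,\infty\}$ --- neither point affects your rescaling argument.)
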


\begin{remark}\label{subtle} This subtle feature of the curves  in $\mathcal{C}_0$, $\mathcal{C}_1$ or $\mathcal{C}_{\infty}$ at  the $\tau_2$-level will play a crucial role in the proof. The precise nature of domain dependence will allow us to quantify the difference between the indices of curves and the virtual dimensions of the moduli spaces containing them (see the proof of Lemma \ref{Cinf}).
\end{remark}

The following property of $\mathbf{ C}$ will be used several times.

\begin{lemma}\label{2cases}
The curve $\mathbf{ C}$ lies at or above the $\tau_2$-level.
\end{lemma}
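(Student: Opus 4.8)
The goal is to show the distinguished curve $\mathbf{C}$ cannot sit strictly below the $\tau_2$-level; equivalently, all of $\mathbf{C}$ maps to $Y_1^{\hat sN}$ (the $\tau_2$-level) or to a symplectization level above it. The natural strategy is to argue by contradiction via an area/action bookkeeping. Suppose $\mathbf{C}$ lies strictly below the $\tau_2$-level. Then none of the three distinguished punctures $0,1,\infty$ of $\mathbf{C}$ can carry the constrained asymptotics $\alpha_1$, $\alpha_1^{k-1}$, $\alpha_1^k$ directly — those asymptotic conditions are imposed at the top (the negative end $\alpha_1^k$) and the positive ends live at $+\infty$ in $Y_1$, while constraint (u5) forces a point on $\mathbf{C}$ (or on a curve in its subtrees) to lie at $\{\tau_2\}\times\partial E_1$. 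So I would first record that the point constraint (u5) must be realized by \emph{some} curve of $\mathbf{F}$ at the $\tau_2$-level, and that this curve is reached from $\mathbf{C}$ only by passing through the subbuildings $\mathcal{C}_0,\mathcal{C}_1,\mathcal{C}_\infty$ or $\mathcal{D}_j$; track which.

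\textbf{Key steps.} First, invoke Lemma \ref{oneup}'s analogue (Lemma \ref{twoup}) and the maximum-principle/Stokes observations to constrain the topology of the subtrees: every curve has between one and two positive punctures, and curves with no positive puncture are impossible. Second, use Lemma \ref{most}: in each symplectization $Y_1$, the $d\lambda_1$-area of a curve is nonnegative and equals the difference (sum of positive-end periods) minus (sum of negative-end periods); iterating up the building from $\mathbf{C}$, the Conley–Zehnder index (equivalently the period, since we are dealing with an irrational ellipsoid) is nonincreasing as one passes from a positive puncture of a lower curve to the matched curve above. Third, suppose for contradiction that $\mathbf{C}$ lies below the $\tau_2$-level. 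Then the curve satisfying (u5) lies in one of the subbuildings $\mathcal{C}_0,\mathcal{C}_1,\mathcal{C}_\infty,\mathcal{D}_j$ attached above $\mathbf{C}$; trace the chain of matched Reeb orbits from $\mathbf{C}$ up to that curve and then further up to the constrained ends $\alpha_1, \alpha_1^{k-1}, \alpha_1^k$. Using the period monotonicity and the fact that the total $d\lambda_1$-area of all curves strictly below the $\tau_2$-level is at least the area of $\mathbf{C}$ itself — which is positive unless $\mathbf{C}$ is a trivial cylinder — derive that $\mathbf{C}$ must be a cover of a trivial cylinder over some iterate of $\alpha_1$. But a trivial cylinder has exactly one positive and one negative puncture and cannot carry the extra punctures $0,1,\infty$ together with the point constraint forcing the image to escape to $\tau=+\infty$ at $0$ while also having a negative end at $\alpha_1^k$ matched below the $\tau_2$-level; more simply, a trivial-cylinder $\mathbf{C}$ has no room for three distinguished punctures plus the branch points its position would require, contradicting that $\mathbf{C}$ is (by construction) the limit of the genuinely nonconstant curves $u_n$ carrying the full data (u1)–(u5). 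Conclude that $\mathbf{C}$ lies at or above the $\tau_2$-level.

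\textbf{Main obstacle.} The delicate point is the area bookkeeping near the $\tau_2$-level: the cobordism is \emph{exact} but the relevant primitive changes between $d(e^\tau\lambda_1)$ and $d(e^\tau\lambda_2)$ across $\Sigma_2 = U_2$, so one must be careful that "$d\lambda_1$-area is nonnegative" is applied only to curves genuinely in the symplectization $Y_1$ below $U_2$, and that the accounting of actions across the splitting hypersurface $\Sigma_2$ is done with the correct contact form. Concretely, the hard part is to rule out the configuration in which $\mathbf{C}$ lives just below $\tau_2$ while a nontrivial piece of the building slips up through $U_2$ to meet the point constraint (u5): one has to use both the period monotonicity of Lemma \ref{most} and the precise location ($\tau\le\tau_2$) of the stretching region, together with the fact that the constrained asymptotic orbits $\alpha_1,\alpha_1^{k-1},\alpha_1^k$ at $0,1,\infty$ are \emph{fixed}, to force all area to be concentrated at or above the $\tau_2$-level. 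Once that bookkeeping is set up carefully the contradiction is immediate; everything else is the routine SFT-compactness and maximum-principle input already established in Lemmas \ref{most}, \ref{oneup}, \ref{twoup}.
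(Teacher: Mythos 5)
Your plan does not hold together at its two pivotal points. First, the step where "period monotonicity plus area bookkeeping" is supposed to force $\mathbf{C}$ to be a cover of a trivial cylinder is not an argument: nonnegativity of the $d\lambda_1$-area of the curves lying below the $\tau_2$-level is satisfied by any building and produces no contradiction, and Lemma \ref{most} only gives inequalities between periods of matched ends, which are perfectly consistent with $\mathbf{C}$ sitting strictly below the level. Second, even if you had somehow forced $\mathbf{C}$ to cover a trivial cylinder, your closing contradiction is false: a branched cover of the trivial cylinder over $\alpha_1$ can have punctures at $0$, $1$, $\infty$ with asymptotics $\alpha_1$, $\alpha_1^{k-1}$, $\alpha_1^{k}$ and satisfy the constraints (u2) and (u5) -- indeed the unique curve of $\mathcal{M}^0(J^0_z)$ produced in Lemma \ref{seed} is exactly such a cover -- so "no room for three distinguished punctures" is not an obstruction, and neither is nonconstancy of the $u_n$. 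The issue you flag as the main obstacle (action accounting across $\Sigma_2$ with two contact forms) is also beside the point for this lemma, which takes place before any splitting along $\Sigma_2$.

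The actual mechanism is much simpler and uses the point constraint (u5) directly. If $\mathbf{C}$ lay strictly below the $\tau_2$-level, the constrained points $u_n(2)\in\{\tau_2\}\times\partial E_1$ would escape upward relative to the levels on which $\mathbf{C}$ is formed, so in the limit $\mathbf{C}$ acquires an extra puncture at $z=2$, and the constrained point is carried by a planar subbuilding of $\mathbf{F}$ attached above $\mathbf{C}$ at that puncture. Since $2$ is not one of the ends $\{0,1,\infty\}$ of the total domain, this planar piece has a single unmatched end, namely the one matched with the puncture of $\mathbf{C}$ at $2$; consequently its top-level curve has no positive punctures at all, and Stokes' theorem in the exact manifold precludes such a curve. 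That two-line argument is the paper's proof; no trivial-cylinder dichotomy, index count, or cross-level action bookkeeping is required.
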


\begin{proof}
If $\mathbf{ C}$ was below the $\tau_2$-level then the point constraint (u5) would force $\mathbf{C}$ to have a puncture at $2$ and $\mathbf{ F}$ to include a curve in the corresponding planar component with only negative punctures. Stokes' Theorem precludes such curves.
\end{proof}

Now we begin our analysis of the collections $\mathcal{C}_0$, $\mathcal{C}_1$, $\mathcal{C}_{\infty}$, and   $\mathcal{D}_j$. We start with
the collections  $\mathcal{D}_j$ corresponding to the additional puctures of $\mathbf{C}$. Each  $\mathcal{D}_j$ is a planar building with a positive puncture asymptotic to $\eta_{z_j}$.  The index formula \eqref{standard} (which applies equally well to buildings) yields the following.
\begin{lemma}\label{D}
The sum of the indices of the curves  in ${\mathcal{ D}}_j$
is at least $2$.
\end{lemma}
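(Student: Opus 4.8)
The statement to prove is Lemma \ref{D}: the sum of the indices of the curves in a planar subbuilding $\mathcal{D}_j$ with top positive puncture asymptotic to $\eta_{z_j}$ is at least $2$. My plan is to apply the index formula \eqref{standard} directly to $\mathcal{D}_j$, viewed as a single planar building, and then estimate using the structure established in Lemma \ref{most}. Concretely: a planar building has a single (top) positive puncture, so its overall asymptotics consist of the positive orbit $\eta_{z_j}$ and no negative orbits --- the plane caps off. Writing $\eta_{z_j}$ as an iterate of $\alpha_1$ or $\beta_1$ (it is a Reeb orbit of $\lambda_1$ since $z_j$ maps into $Y_1$ or to the $\tau_2$-level), the index formula \eqref{standard} applied to the building (with $m^+ + n^+ = 1$, $m^- = n^- = 0$) gives
\begin{equation*}
\sum_{C \in \mathcal{D}_j} \mathrm{index}(C) = 2\left( r + \left\lfloor \frac{r a_1}{b_1} \right\rfloor \right) \quad \text{or} \quad 2\left( r + \left\lfloor \frac{r b_1}{a_1} \right\rfloor \right),
\end{equation*}
where $r \geq 1$ is the multiplicity of $\eta_{z_j}$. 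Since $r \geq 1$, each of these expressions is at least $2$, which is exactly the claim.

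\textbf{Key steps in order.} First I would recall that the index formula \eqref{standard} is additive over the curves of a building: the interior Euler-characteristic and Conley--Zehnder contributions telescope along matched punctures, so the sum of the indices of the constituent curves equals the index of the building computed from its outer asymptotics alone. Second, I would observe that a planar building has exactly one positive puncture and no negative punctures (Stokes/the maximum principle rules out extra negative ends at the bottom, just as in Lemma \ref{oneup}); hence in \eqref{standard} the negative sums vanish and $m^+ + n^+ = 1$. Third, I would plug in: the asymptotic orbit $\eta_{z_j}$ is an iterate $\alpha_1^r$ or $\beta_1^r$ with $r \in \N$, so the sum of indices equals $2(r + \lfloor r a_1/b_1 \rfloor)$ or $2(r + \lfloor r b_1/a_1 \rfloor)$. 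Finally, since $r \geq 1$ and the floor terms are nonnegative, this is $\geq 2$.

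\textbf{The main obstacle.} The only genuine subtlety is confirming additivity of the index formula over a building together with the exactness input that forces a planar building to have no negative ends. For additivity one must check that the breaking orbits contribute with opposite signs in the two curves they join, which is immediate from the $+\mathrm{CZ}$ / $-\mathrm{CZ}$ bookkeeping in \eqref{standard} and the fact that each connecting Reeb orbit is a negative puncture for the curve above and a positive puncture for the curve below; the $m + n - 2$ Euler-characteristic terms then add up correctly because gluing a cylinder's worth of matched punctures raises the count by exactly the right amount to reproduce $-2$ for the glued surface of genus zero. For the no-negative-ends claim, exactness of the cobordism (equivalently the symplectization) means that a curve with only negative punctures would have nonpositive $d\lambda$-area yet be nonconstant, contradicting Stokes' Theorem --- precisely the argument already used in Lemmas \ref{oneup} and \ref{2cases}. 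Once these two points are in hand, the bound $\geq 2$ is a one-line consequence of $r \geq 1$, so I do not expect any further difficulty.
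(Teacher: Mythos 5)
Your argument is correct and is essentially the paper's own: the paper simply notes that the index formula \eqref{standard} applies to the planar building $\mathcal{D}_j$, and with one positive puncture asymptotic to $\alpha_1^r$ or $\beta_1^r$ and no negative punctures this gives total index $2\bigl(r+\lfloor ra_1/b_1\rfloor\bigr)$ or $2\bigl(r+\lfloor rb_1/a_1\rfloor\bigr)\geq 2$, exactly as you compute. Your added remarks on additivity of \eqref{standard} over the levels of a building and on Stokes' Theorem excluding extra negative ends just make explicit what the paper leaves implicit.
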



Next we analyze the collections $\mathcal{C}_0$, $\mathcal{C}_1$, and $\mathcal{C}_{\infty}$, in turn.

\begin{lemma}\label{C1}
If the puncture of $\mathbf{ C}$  at $1$ is  positive, then the curves of $\mathcal{C}_1$ either all cover trivial cylinders or their collective index is at least two. If the puncture of $\mathbf{ C}$  at $1$ is negative, then the curves of $\mathcal{C}_1$ have a collective index of at least six.
\end{lemma}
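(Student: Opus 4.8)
The plan is to analyze the cylindrical subbuilding $\mathcal{C}_1$ asymptotic to $\eta_1$ at one end and to $\alpha_1^{k-1}$ at the other, splitting into the two cases according to the sign of the puncture of $\mathbf{C}$ at $1$. First suppose the puncture is positive, so $\eta_1$ is a negative asymptotic for $\mathbf{C}$ and a positive asymptotic for the matching curve of $\mathcal{C}_1$. Applying Lemma \ref{most} (or its symplectization form) and the ordering-by-period principle along the chain of curves from $\eta_1$ down to $\alpha_1^{k-1}$, we get $\mathrm{CZ}(\alpha_1^{k-1}) \le \mathrm{CZ}(\eta_1)$. If every curve of $\mathcal{C}_1$ is a cover of a trivial cylinder we are in the first alternative. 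Otherwise some curve $C$ of $\mathcal{C}_1$ is nontrivial; by Lemma \ref{most} each curve of $\mathcal{C}_1$ has even, nonnegative index, with each nontrivial one in a symplectization contributing at least $2$, and the index formula \eqref{standard} applied to the building $\mathcal{C}_1$ (as noted before Lemma \ref{D}) shows the collective index is the difference $\mathrm{CZ}(\eta_1) - \mathrm{CZ}(\alpha_1^{k-1})$ plus contributions from intermediate orbits; since at least one nontrivial curve is present this is a positive even integer, hence at least $2$. The one subtlety is the possible presence of a curve of $\mathcal{C}_1$ at the $\tau_2$-level, where by Lemma \ref{s1} the reparametrization group is smaller; but Convention 2 fixes ``index'' to mean the value of \eqref{standard}, which is unaffected, so the bookkeeping above is valid verbatim.

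Now suppose the puncture of $\mathbf{C}$ at $1$ is negative. Then $\eta_1$ is a \emph{positive} asymptotic of some curve in $\mathcal{C}_1$ sitting above $\mathbf{C}$, while $\alpha_1^{k-1}$ remains the other end. Recall from hypothesis (u3) that the original curves $u_n$ carry a positive puncture at $1$ asymptotic to $\alpha_1^{k-1}$; since the puncture of $\mathbf{C}$ at $1$ has now become negative, the entire ``incoming'' positive orbit $\alpha_1^{k-1}$ must be accounted for by a nontrivial part of $\mathcal{C}_1$ that then caps off through $\eta_1$. Concretely, $\mathcal{C}_1$ is a cylindrical building running from $\alpha_1^{k-1}$ at the bottom to $\eta_1$ at the top, \emph{reversing the role of $\eta_1$}, so that now $\mathrm{CZ}(\eta_1) \le \mathrm{CZ}(\alpha_1^{k-1})$ is the wrong direction — instead the collective index of $\mathcal{C}_1$ equals $\mathrm{CZ}(\alpha_1^{k-1}) - \mathrm{CZ}(\eta_1)$ plus a nonnegative correction, and because $\mathbf{C}$ has spawned this whole subbuilding rather than just a trivial cylinder, at least one of its curves must have a positive puncture on $\alpha_1^{k-1}$ and a negative one on a strictly shorter orbit. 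The claim is that in this reversed configuration one loses, rather than gains, relative to the index budget, and the deficit forces the collective index to be at least $6$; the factor can be seen by noting that (i) the top curve asymptotic to $\eta_1$ with only negative ends (below it) has index $\ge 2$ by Lemma \ref{most} unless it is a cover of a trivial cylinder, which is impossible since $\eta_1 \ne \alpha_1^{k-1}$ here, and (ii) compared to the ``correct'' orientation the change $\eta_1 \leftrightarrow \alpha_1^{k-1}$ of which orbit is the cap costs $2(\mathrm{CZ}(\alpha_1^{k-1}) - \mathrm{CZ}(\eta_1))$ at minimum; combined with the hypothesis $\mathrm{CZ}(\alpha_1^k) = \mathrm{CZ}(\alpha_1^{k-1}) + 2$ of Theorem \ref{alt} and the fact that $\eta_1$ is then forced to be an iterate of small index, the arithmetic pins the total at $\ge 6$.

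The main obstacle, and the place where care is needed, is the second case: making precise the claim that a negative puncture of $\mathbf{C}$ at $1$ forces a collective index jump all the way to $6$ rather than merely $2$ or $4$. This requires a careful accounting using the explicit index formula \eqref{standard}, the constraint that all curves in $\mathcal{C}_1$ have even nonnegative index, the impossibility of trivial-cylinder covers asymptotic to two genuinely different orbits, and — crucially — the interplay with the $\tau_2$-level domain dependence from Lemma \ref{s1}, which by Remark \ref{subtle} contributes exactly the defect between index and virtual dimension. I would handle it by writing $\mathcal{C}_1$ as an ordered chain of curves, summing \eqref{standard} over the chain so the intermediate orbits telescope, isolating the contribution of the (necessarily nontrivial) top curve and of the curve carrying the reversed $\alpha_1^{k-1}$ puncture, and using $k < b_1/a_1$–type floor estimates as in the proof of Lemma \ref{positive} to rule out the intermediate values $2$ and $4$.
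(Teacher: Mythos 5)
There is a genuine gap, and it sits exactly where you flagged it: the second case. When the puncture of $\mathbf{C}$ at $1$ is negative, the end of $\mathcal{C}_1$ asymptotic to $\eta_1$ is a \emph{positive} end of a curve lying \emph{below} $\mathbf{C}$ (a negative puncture of $\mathbf{C}$ matches a positive puncture one level down), not of a curve above $\mathbf{C}$ as you state. Consequently the relevant piece of $\mathcal{C}_1$ — in the paper's proof, the unique subset of $\mathcal{C}_1\smallsetminus\mathcal{C}_1^+$ containing the $\eta_1$ end, where $\mathcal{C}_1^+$ denotes the curves above the level of $\mathbf{C}$ — is a cylindrical subbuilding with \emph{two positive ends and no negative ends}. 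Applying \eqref{standard} to it (which, as noted before Lemma \ref{D}, is legitimate for buildings) gives index $= \mathrm{CZ}(\gamma)+\mathrm{CZ}(\eta_1) \geq 3+3 = 6$, since every closed Reeb orbit on an irrational ellipsoid boundary has Conley--Zehnder index at least $3$. This sum of two $\mathrm{CZ}$'s is the whole source of the jump to $6$; your telescoping plan produces a \emph{difference} $\mathrm{CZ}(\alpha_1^{k-1})-\mathrm{CZ}(\eta_1)$ "plus corrections," which cannot yield the bound, and the auxiliary devices you propose — the hypothesis $\mathrm{CZ}(\alpha_1^k)=\mathrm{CZ}(\alpha_1^{k-1})+2$ of Theorem \ref{alt}, floor estimates as in Lemma \ref{positive}, a "top curve asymptotic to $\eta_1$ with only negative ends" (such a curve cannot exist by Stokes) — are neither needed nor sufficient. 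The remaining pieces are handled as in the paper: $\mathcal{C}_1^+$ contributes nonnegatively and the other subsets are planar buildings contributing at least $2$ each by the Lemma \ref{D} argument.

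In the first case your conclusion is right but the mechanism is shaky: you cannot apply Lemma \ref{most} to every curve of $\mathcal{C}_1$ individually, since by Lemma \ref{twoup} curves of $\mathbf{F}$ may have two positive punctures, and curves at the $\tau_2$-level are not holomorphic for a $\lambda_1$-cylindrical structure, so Lemma \ref{most} does not apply to them "verbatim" (indeed Lemma \ref{Cinf} shows such curves can have negative index); also your inequality $\mathrm{CZ}(\alpha_1^{k-1})\le \mathrm{CZ}(\eta_1)$ is reversed, since $\alpha_1^{k-1}$ is the top (positive, external) end of $\mathcal{C}_1$ and $\eta_1$ the bottom. The paper avoids both issues by restricting the Lemma \ref{most}--type dichotomy (trivial covers or index $\ge 2$) to the building $\mathcal{C}_1^+$ above $\mathbf{C}$, which is genuinely $J_1$-holomorphic with a single positive (external) end by Lemma \ref{2cases}, and by bounding everything hanging below it — planar caps in case one, the two-positive-ended cylinder plus planar caps in case two — purely through the level-insensitive formula \eqref{standard}. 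You should rework both cases along these lines.
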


\begin{proof}
The curves of $\mathcal{C}_1$ form a cylindrical building  that  connects $\eta_1$ to $\alpha_1^{k-1}$.
Let $\mathcal{C}^+_1$ be the subset of curves of $\mathcal{C}_1$  that lie above the level of $\mathbf{ C}$. By the maximum principle, the curves of  $\mathcal{C}^+_1$ comprise a connected building whose total domain is a sphere with at least two punctures. Exactly one of these punctures is positive and it is asymptotic to $\alpha_1^{k-1}$.
By Lemma \ref{2cases} the curves of  $\mathcal{C}^+_1$ are all $J_1$-holomorphic.  Arguing as in  Lemma \ref{most} it then follows that the curves in $\mathcal{C}^+_1$ either all cover a trivial cylinder or their collective index is at least two.

If the puncture of $\mathbf{C}$  at $1$ is positive, then at least one of the negative punctures of  $\mathcal{C}^+_1$ is asymptotic to $\eta_1$.  The rest of the curves of $\mathcal{C}_1$ can then be sorted into subsets indexed by the negative punctures of the building ${\mathcal{C}}^+_1$ other than the one that is asymptotic to $\eta_1$. Each of these subsets is a planar building (which  caps a negative puncture of ${\mathcal{C}}^+_1$). Arguing as in Lemma \ref{D}, it follows that  these curves also contribute at least two to the total index. This settles the first assertion of Lemma \ref{C1}.

Suppose that  the puncture of $\mathbf{C}$  at $1$ is  negative. Then the curves of $\mathcal{C}_1$
not in $\mathcal{C}^+_1$ can all be sorted into subsets indexed by the negative punctures of ${\mathcal{C}}^+_1$. Exactly one of these subsets forms a cylindrical subbuilding with two positive ends. The index formula \eqref{standard} implies that this cylindrical subbuilding contributes at least six to the total index. The other subsets form planar buildings which, as above, contribute at least 2 to the total index.
\end{proof}

\begin{lemma}\label{C0}
If the puncture of $\mathbf{C}$  at $0$ is positive, then the curves of $\mathcal{C}_0$ are all trivial. If the puncture of $\mathbf{C}$ at $0$ is negative, then the curves of $\mathcal{C}_0$ have a collective index of at least six.
\end{lemma}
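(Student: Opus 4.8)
The plan is to follow the template of the proof of Lemma \ref{C1}, dividing into cases according to the sign of the puncture of $\mathbf{C}$ at $0$; the first case will be genuinely easier than its $\mathcal{C}_1$-counterpart precisely because $\alpha_1$ is the shortest closed Reeb orbit on $\partial E_1$. Throughout I will use that $\mathcal{C}_0$ is a cylindrical subbuilding of $\mathbf{F}$ whose two external ends are asymptotic to $\eta_0$ and to $\alpha_1$, the latter being the asymptotic orbit at the positive puncture of $u_n$ at $z=0$.

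First I would treat the case in which the puncture of $\mathbf{C}$ at $0$ is positive. Then the curves of $\mathcal{C}_0$ lie at levels strictly above that of $\mathbf{C}$, so by Lemma \ref{2cases} they map to symplectizations of $\partial E_1$, where the relevant almost complex structure coincides with the $\lambda_1$-cylindrical structure $J_1$. Moreover $\eta_0$ is the unique negative external end of $\mathcal{C}_0$, matching the positive puncture of $\mathbf{C}$, and $\alpha_1$ is its unique positive external end. Summing $\int w^*(d\lambda_1)\ge 0$ over the curves $w$ of $\mathcal{C}_0$, the contributions of matched interior ends cancelling in pairs, I obtain
\[
0 \;\le\; \sum_{w\in \mathcal{C}_0}\int w^*(d\lambda_1) \;=\; a_1 - \text{period}(\eta_0).
\]
Since $a_1$ is the minimal period of a closed Reeb orbit of $\lambda_1$ and is attained only by $\alpha_1$, this forces $\eta_0=\alpha_1$ and the total $d\lambda_1$-area of $\mathcal{C}_0$ to vanish. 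Hence every curve of $\mathcal{C}_0$ has zero $d\lambda_1$-area and therefore, exactly as in the final line of the proof of Lemma \ref{most}, covers the trivial cylinder over $\alpha_1$. This is the first assertion.

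Next I would treat the case in which the puncture of $\mathbf{C}$ at $0$ is negative. Then $\eta_0$ is a \emph{positive} external end of $\mathcal{C}_0$, since it is matched in the building $\mathbf{F}$ with a negative puncture of $\mathbf{C}$ and $\mathcal{C}_0$ caps that puncture from below; and $\alpha_1$ is also a positive external end of $\mathcal{C}_0$, being the asymptotic orbit at a positive puncture of $u_n$. Thus $\mathcal{C}_0$ is a genus-zero building with two positive external ends and no negative ones. Since the index formula \eqref{standard} applies equally well to buildings, and the sum of the indices of the curves of $\mathcal{C}_0$ equals the value of \eqref{standard} on the external asymptotics of $\mathcal{C}_0$ (matched ends again cancelling in pairs), the collective index of $\mathcal{C}_0$ is
\[
(1+1) - 2 + \mathrm{CZ}(\eta_0) + \mathrm{CZ}(\alpha_1) \;=\; \mathrm{CZ}(\eta_0)+\mathrm{CZ}(\alpha_1).
\]
By the Conley--Zehnder index formulas recalled in the Introduction, every closed Reeb orbit of $\lambda_1$ has index at least $\mathrm{CZ}(\alpha_1)=3$, so this collective index is at least $6$. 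This is the second assertion. (In the subsequent compactness argument this second case will be discarded, since the total index of $\mathbf{F}$ equals $2$.)

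The step I expect to require the most care is the bookkeeping: identifying which curves of $\mathcal{C}_0$ lie above the level of $\mathbf{C}$ in the first case, and which external ends of $\mathcal{C}_0$ are positive versus negative in each case. I would handle this exactly as in the proofs of Lemmas \ref{oneup}, \ref{twoup} and \ref{C1}, using the maximum principle in the symplectizations together with Lemma \ref{2cases}; no ingredient beyond those already in place for the analysis of $\mathcal{C}_1$ is needed.
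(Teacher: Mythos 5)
Your proof is correct and takes essentially the same route as the paper: minimality of the action of $\alpha_1$ (via Stokes in the symplectization levels) forces $\eta_0=\alpha_1$ and triviality when the puncture at $0$ is positive, and the index formula \eqref{standard}, applied to a genus-zero building with two positive external ends and no negative ones together with $\mathrm{CZ}\geq \mathrm{CZ}(\alpha_1)=3$, gives the bound of six in the negative case. The only cosmetic difference is one of ordering: the paper first shows the portion of $\mathcal{C}_0$ above the level of $\mathbf{C}$ is a trivial cylinder over $\alpha_1$ and then concludes that nothing else can belong to $\mathcal{C}_0$, whereas you assert at the outset that all of $\mathcal{C}_0$ lies above $\mathbf{C}$ — the action argument you then run is precisely what justifies that assertion, so no substantive gap remains.
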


\begin{proof}
Let $\mathcal{C}^+_0$ be the subset of curves of $\mathcal{C}_0$  that lie above the level of $\mathbf{ C}$. As the unmatched positive puncture of $\mathcal{C}^+_0$ is asymptotic to $\alpha_1$, which has minimal action, we see that $\mathcal{C}^+_0$ is just a trivial cylinder over $\alpha_1$.
If the puncture of $\mathbf{C}$  at $0$ is positive then there can be no other curves of $\mathcal{C}_0$ and we are done.

If the puncture of $\mathbf{C}$  at $0$ is negative then the curves of $\mathcal{C}_0$ at and below the level of $\mathbf{ C}$ form a cylindrical building with two positive ends connecting $\eta_1$ to  $\alpha_1$. Appealing again to the index formula \eqref{standard}, it follows that  these curves contribute at least six to the total index.

\end{proof}


\begin{lemma}\label{Cinf}
If $\mathbf{C}$ is in the $\tau_2$-level, then the curves of $\mathcal{C}_{\infty}$ either all cover trivial cylinders or their collective index is at least $2$. If $\mathbf{C}$ is above  the $\tau_2$-level, then $\mathcal{C}_{\infty}$ has exactly one curve, $\mathbf{ C}_{\infty}$, that is at the $\tau_2$-level and is not part of a planar component. It has index at least $-2$ and the other curves of  $\mathcal{C}_{\infty}$ either all cover trivial cylinders or their collective index is at least $2$.
\end{lemma}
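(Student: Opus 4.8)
The plan is to dissect $\mathcal{C}_{\infty}$ level by level, in the spirit of Lemmas \ref{C1} and \ref{C0}. The new feature is that one end of $\mathcal{C}_{\infty}$ is $\alpha_1^k$, which is the overall negative asymptotic of $\mathbf{F}$ and so lies at the bottom of the building; hence $\mathcal{C}_{\infty}$ descends from the level of $\mathbf{C}$ (in particular the puncture of $\mathbf{C}$ at $\infty$ is negative, a cylindrical building with two negative ends being excluded by the maximum principle, cf.\ Lemma \ref{oneup}), and when $\mathbf{C}$ sits strictly above the $\tau_2$-level this descent must pass through it. First I would record, exactly as for $\mathcal{C}_1$, that $\mathcal{C}_{\infty}$ is a backbone of cylinders running from $\eta_{\infty}$ down to $\alpha_1^k$ together with planar subbuildings capping extra negative punctures of backbone curves. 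By \eqref{standard} applied to buildings, as in Lemma \ref{D}, each such planar subbuilding contributes at least $2$ to the total index; and by Lemma \ref{most} each backbone curve mapping to a level other than the $\tau_2$-level --- where the almost complex structure is the cylindrical $J_1$ --- has even nonnegative index, equal to $0$ only if it covers a trivial cylinder.

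If $\mathbf{C}$ lies at the $\tau_2$-level, every curve of $\mathcal{C}_{\infty}$ lies at a level below it, so every backbone curve is $J_1$-holomorphic; combining the two bounds, either all curves of $\mathcal{C}_{\infty}$ cover trivial cylinders or their collective index is at least $2$. If $\mathbf{C}$ lies strictly above the $\tau_2$-level, then since the backbone is a simple chain of cylinders, linearly ordered by level, and descends from above the $\tau_2$-level to $\alpha_1^k$ at or below it, the backbone contains exactly one curve at the $\tau_2$-level; this is $\mathbf{C}_{\infty}$, and any other curve of $\mathcal{C}_{\infty}$ at that level lies in a planar subbuilding. All curves of $\mathcal{C}_{\infty}$ other than $\mathbf{C}_{\infty}$ map to levels other than the $\tau_2$-level, hence are $J_1$-holomorphic, and as before they either all cover trivial cylinders or collectively have index at least $2$. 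It remains to prove $\mathrm{index}(\mathbf{C}_{\infty}) \ge -2$.

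For this I would invoke Lemma \ref{s1} and carry out the dimension count promised in Remark \ref{subtle}. By Lemma \ref{s1}, after placing its two backbone punctures at $0$ and $\infty$ and writing $\Gamma$ for the remaining punctures, $\mathbf{C}_{\infty}$ is pseudo-holomorphic for a domain dependent family of almost complex structures depending only on $\arg z$. The reparametrization group that actually acts --- biholomorphisms of $\C^* \setminus \Gamma$ under which this family is invariant --- is strictly smaller than the one implicit in the index formula \eqref{standard} for symplectization curves: the $\tau_2$-level target admits no $\R$-translation, and the prescribed angular dependence rigidifies the rotational reparametrization at $0$ and $\infty$. Quantifying these two effects, one finds that the virtual dimension of the moduli space of curves of the combinatorial type of $\mathbf{C}_{\infty}$, taken relative to this smaller group, exceeds $\mathrm{index}(\mathbf{C}_{\infty})$ (computed via \eqref{standard}, per Convention 2) by exactly $2$. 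Since this moduli space contains $\mathbf{C}_{\infty}$ and, for the generically chosen path $J^s_z$, has dimension equal to its virtual dimension --- directly when $\mathbf{C}_{\infty}$ is somewhere injective, and otherwise by applying the count to the underlying simple curve and comparing indices --- its virtual dimension is nonnegative, whence $\mathrm{index}(\mathbf{C}_{\infty}) \ge -2$.

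The level-by-level bookkeeping is routine, recycling Lemmas \ref{most} and \ref{D} and the arguments of Lemmas \ref{C1} and \ref{C0}. The main obstacle is the last step: making the defect count of Remark \ref{subtle} precise --- confirming that the combined loss of the $\R$-translation on the $\tau_2$-level and of the rotational reparametrization at the distinguished punctures contributes exactly $2$ to the virtual dimension --- and treating cleanly the possibility that $\mathbf{C}_{\infty}$ is multiply covered, which forces one to pass to its underlying simple curve and control the discrepancy between the two indices.
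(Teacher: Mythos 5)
Your level-by-level bookkeeping (maximum principle forcing all of $\mathcal{C}_{\infty}$ below the $\tau_2$-level in the first case, the unique non-planar curve $\mathbf{C}_{\infty}$ at the $\tau_2$-level in the second, planar caps contributing at least $2$ via Lemma \ref{D}, and Lemma \ref{most} for the $\lambda_1$-cylindrical levels) is essentially the paper's argument, up to the minor slip that curves of $\mathcal{C}_{\infty}$ lying in planar subbuildings may themselves sit at the $\tau_2$-level and hence need not be $J_1$-holomorphic; their contribution is controlled by the building-level index formula as in Lemma \ref{D}, not by Lemma \ref{most}.

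The genuine problem is in your derivation of $\mathrm{index}(\mathbf{C}_{\infty})\ge -2$, where your accounting contains two errors that happen to cancel. First, the defect between the index of Convention 2 and the dimension of the moduli space modulo the actual reparameterization group is $1$, not $2$: formula \eqref{standard} is the virtual dimension modulo biholomorphisms of the domain and never quotients by the target $\R$-translation (this is why trivial cylinders have index $0$ and Lemma \ref{most} gives index $\ge 2$, not $\ge 1$, for nontrivial symplectization curves), so the only loss is the rotational reparameterization killed by the $\arg z$-dependence, i.e.\ a $1$-dimensional group (scaling) instead of a $2$-dimensional one (rotation and scaling). Second, your claim that for the generic path $J^s_z$ the moduli space containing $\mathbf{C}_{\infty}$ is cut transversally and therefore has nonnegative virtual dimension is not available in a $1$-parameter family: parametric genericity only excludes curves whose deformation index modulo reparameterization is less than $-1$, since index $-1$ configurations can and do appear at isolated values of $s$ (this is exactly the distinction the paper draws between Lemma \ref{Cinf}, with bound $-2$, and Lemma \ref{Cinfnew}, with bound $0$, in the later splitting argument where no path is involved). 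The correct count is: deformation index modulo reparameterization $\ge -1$ for a generic path, and this deformation index exceeds the Convention 2 index by exactly $1$, giving $\mathrm{index}(\mathbf{C}_{\infty})\ge -2$. Your two misstatements compensate numerically, but as written neither intermediate claim is justified, and with the defect corrected your transversality assertion would yield the too-strong bound $\mathrm{index}(\mathbf{C}_{\infty})\ge 0$. Relatedly, your parenthetical reduction to the underlying simple curve is not needed and does not obviously work here; the point of the prescribed angular domain dependence (Lemma \ref{s1}, Remark \ref{subtle}) is precisely to make the expected-dimension argument apply to $\mathbf{C}_{\infty}$ itself without distinguishing multiply covered curves.
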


\begin{proof}
When $\mathbf{C}$ is in the $\tau_2$-level the maximum principle implies that all the curves of $\mathcal{C}_{\infty}$  must lie below the $\tau_2$-level. (Otherwise $\mathcal{C}_{\infty}$ would include a curve with no positive ends.)
Thus,  each curve of $\mathcal{C}_{\infty}$ has one positive end and is pseudo-holomorphic with respect to a $\lambda_1$-cylindrical almost complex structure. The first assertion then follows as in Lemma \ref{most}.

If $\mathbf{C}$ lies above the $\tau_2$-level, then by the maximum principle $\mathcal{C}_{\infty}$ contains a unique curve, $\mathbf{C}_{\infty}$,  in the $\tau_2$-level which is not part of a planar subset.
By Lemma \ref{s1}, it can be represented as a map with domain $\C^* \setminus \Gamma$ and satisfies an $\arg z$ dependent holomorphic curve equation.
To appear as a limit in a generic $1$-parameter family we expect $\mathbf{C}_{\infty}$ to have virtual deformation index of at least $-1$, modulo reparameterization.
There is a 1-dimensional reparameterization group given by scaling, whereas domain independent curves have a 2-parameter reparameterization group given by rotation and scaling. Hence,  the virtual deformation index of $\mathbf{C}_{\infty}$ modulo reparameterization,  is one greater than its index as defined by Convention 2. So, the (conventional) index of $\mathbf{C}_{\infty}$ is at least $-2$. By Lemma \ref{most} the other curves of  $\mathcal{C}_{\infty}$ must either all cover trivial cylinders or their collective index is at least $2$.
\end{proof}

At this point we can complete the proof of Proposition \ref{def}.  Overall, we have the index equality
\begin{equation}\label{preserve1}
2= \mathrm{index}(\mathbf{F})=  \mathrm{index}(\mathbf{ C}) + \mathrm{index}(\mathcal{ C}_0)+ \mathrm{index}(\mathcal{C}_1) + \mathrm{index}(\mathcal{C}_{\infty}) + \sum_j\mathrm{index}(\mathcal{D}_j).
\end{equation}
By Lemma \ref{2cases}, $\mathbf{ C}$ is either at or above the $\tau_2$-level.


\medskip

\noindent \textbf{Assume that  $\mathbf{C}$ lies at the $\tau_2$-level.} With this,  Lemmas  \ref{D}, \ref{C1}, \ref{C0} and \ref{Cinf} and the consequence of \eqref{standard} that all indices (using our convention) are even imply that either $\mathbf{C}$ has index $2$ or its index is at most $0$. In the first case it follows from these same  Lemmas that the curves of $\mathbf{ F}$, other than $\mathbf{C}$, must all  cover trivial cylinders and so we are done.
As we now describe, the remaining case can, generically, be excluded.

Suppose that $\mathbf{C}$ has index at most $0$. The Lemmas listed above in fact imply that the index of $\mathbf{C}$ is at most  at most $-4$ if $\mathcal{C}_0$ is nontrivial.
Since $\mathbf{C}$ is at the $\tau_2$-level, it satisfies a domain dependent equation and so has a 0-dimensional reparameterization group. Hence, $\mathbf{ C}$ belongs to a moduli space whose unconstrained virtual dimension is at most $6$ when $\mathcal{C}_0$ is trivial, and at most $2$ when $\mathcal{C}_0$ is nontrivial. However, $\mathbf{C}$ has punctures at $0,1$, and $\infty$ and it must satisfy (u5). This is a 7-dimensional family of constraints and so will not generically be satisfied (for curves appearing in a 1-parameter family) if $\mathcal{C}_0$ is nontrivial. But if $\mathcal{C}_0$ is trivial,  then $\mathbf{C}$ would also have to satisfy the constraint (u2). Thus, $\mathbf{ C}$ would belong to a moduli space whose constrained virtual dimension is $-2$. Such curves can again be avoided  in generic 1-parameter families.

\medskip
\noindent \textbf{Assume that  $\mathbf{C}$ lies above the $\tau_2$-level.} We show that this situation can also be precluded. Since $\mathbf{C}$ lies above the $\tau_2$-level,  constraint (u5) implies that $\mathbf{C}$ has a puncture at $2$ in addition to its punctures at $0$, $1$, and $\infty$. By Lemma \ref{D}
we then have
\begin{equation*}
\sum_j\mathrm{index}(\mathcal{D}_j) \geq 2.
\end{equation*}
It also follows from Lemma \ref{Cinf}, that $\mathrm{index}(\mathcal{C}_{\infty}) \geq -2$.


Now, if $\mathcal{C}_0$ is nontrivial or the puncture at $1$ is negative, then Lemmas  \ref{C1} and \ref{C0} imply that $\mathrm{index}(\mathbf{ C}) \le -4$ and $\mathbf{ C}$ has a single positive end.
Taken together,  these statements contradict Lemma \ref{most}.


If $\mathcal{C}_0$ is trivial and the puncture at $1$ is positive,
then $\mathbf{ C}$ has a positive end at $0$ asymptotic to $\alpha_1$, and a positive end at $1$ of action at most that of $\alpha_1^{k-1}$. By the maximum principle there are no more positive punctures. As $\mathrm{index}(\mathcal{C}_{\infty}) \geq -2$ we see that $\mathbf{ C}$ has a negative puncture at $\infty$ of action at least that of $\alpha_1^{k-1}$. Together with the negative puncture at $2$, we observe that $\mathbf{ C}$ necessarily has nonpositive action, and indeed can only be a cover of the trivial cylinder with positive ends at $0,1$ asymptotic to $\alpha_1$ and $\alpha_1^{k-1}$ respectively, and negative punctures at $2, \infty$ asymptotic to $\alpha_1$ and $\alpha_1^{k-1}$ respectively. There is a unique such curve up to scale (translation) which satisfies constraint (u2). In particular the asymptotic approach at $\infty$ is determined, that is, approaching along the real axis we limit at a specific point on $\alpha_1$. On the other hand, by Lemma \ref{Cinf}, in a $1$-parameter family we expect to see only finitely many buildings $\mathcal{C}_{\infty}$ with asymptotes at $\alpha_1^{k-1}$ and $\alpha_1^k$, at least up to reparameterization by scaling. This determines finitely many possible asymptotic limits at the positive end and generically we do not expect the limits to coincide with that of $\mathbf{ C}$, contradicting the matching condition for limiting buildings.

\subsection{Splitting along $\Sigma_2$}
Choose a regular $J_z$ in $\mathcal{J}^0_{\{0,1,\infty\}} \cap \mathcal{J}_{\Sigma_2}$ such that the stretched families  $J^N_z$
in $\mathcal{J}^N_{\{0,1,\infty\}} \cap \mathcal{J}^N_{\Sigma_2}$ are regular for all $N \in \N$. Together,  Lemma \ref{seed} and Proposition \ref{def} imply that for all $N \in \N$ there is a curve $u_N$ in  $\mathcal{M}^N(J^N_z)$.
After passing to a subsequence,  if necessary, we may assume that the $u_N$ converge to a limiting building $\mathbf{ G}$ in the resulting split manifold.  This split manifold is diffeomorphic to $(\R \times \partial E_1) \smallsetminus \Sigma_2$ and so has two components. One is the completion $(X_1^2, \omega)$ that appears in the statement of Theorem \ref{pardon}. The other, which we denote by $(Z, \Omega)$, can be described  as the completion of the obvious symplectic cobordism from $\Sigma_2$ to $\{\tau_2\} \times  \partial E_1$. In this section we prove the following.

\begin{proposition} \label{almost}
For $J_z$ as above, there is exactly one curve of $\mathbf{ G}$ that maps to $X_1^2$. This curve, $\tilde{u}$, can be parameterized with domain $\CP^1 \smallsetminus \{0, \infty\}$ with a  positive puncture at $0$ asymptotic to $\alpha_2^{\ell}$ and a negative puncture at $\infty$ asymptotic to $\alpha_1^k$. It is pseudo-holomorphic with respect to a family of almost complex structures  $\tilde{J}_z$ on $X_1^2$ depending only on $\arg z$.
\end{proposition}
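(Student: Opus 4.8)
The plan is to run essentially the same index-and-action bookkeeping used in the proofs of Propositions \ref{I} and \ref{def}, now for the building obtained by splitting along $\Sigma_2$, and then to track the domain dependence through that splitting. By the compactness theorem of \cite{behwz}, after passing to a subsequence the $u_N$ converge to a holomorphic building $\mathbf{G}$ in the split manifold, whose levels, read from the bottom up, are: the completion $X_1^2$, finitely many copies of the symplectization $\R\times\partial E_2$, the completion $Z$, and a priori finitely many copies of $\R\times\partial E_1$. The negative puncture of $u_N$ at $\infty$ (asymptotic to $\alpha_1^k$) forces $X_1^2$ to be the bottom level, while the positive punctures at $0$ and $1$ (asymptotic to $\alpha_1$ and $\alpha_1^{k-1}$) sit at the top. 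Since the index formula \eqref{standard} is preserved under the splitting (cf.\ Convention 2), the total index of $\mathbf{G}$, computed by \eqref{standard}, equals that of each $u_N$, namely $2$.

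As in Lemmas \ref{most} and \ref{twoup}, every curve of $\mathbf{G}$ has one or two positive punctures; every curve in a symplectization level has nonnegative even index, which is zero only if the curve covers a trivial cylinder, and satisfies Conley--Zehnder monotonicity between its positive orbit and each of its negative orbits; every curve in the exact cobordisms $X_1^2$ and $Z$ has nonnegative symplectic area; and, as in Lemma \ref{D}, any planar subbuilding capping a puncture contributes at least $2$ to the total index. Since $\tau_2$ was chosen so that $\{\tau_2\}\times\partial E_1$ lies in the compact part of $Z$ and $(+\infty,p_1)$ lies at the positive end of $Z$, both constraints (u2) and (u5) are constraints on curves of $\mathbf{G}$ mapping to $Z$, each generically lowering the relevant moduli dimension by one.

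Now organize the curves of $\mathbf{G}$ exactly as in the proof of Proposition \ref{def}. Because $u_N(2)\in\{\tau_2\}\times\partial E_1$ lies deep on the $Z$-side of the splitting locus, the component $\mathbf{C}$ to which the $u_N$ converge in $C^\infty_{\mathrm{loc}}$ away from $\{0,1,\infty\}$ and the bubble points maps to $Z$, keeps the punctures at $0$ and $1$, and carries both (u2) and (u5); all remaining curves fall into subbuildings $\mathcal{C}_0,\mathcal{C}_1$ above $\mathbf{C}$ and $\mathcal{C}_\infty$ below $\mathbf{C}$, together with planar subbuildings over the bubble points. Since $\mathbf{C}$ is a rigid (thrice-punctured) curve carrying two constraints, appearing in the $N\to\infty$ limit forces $\mathrm{index}(\mathbf{C})\ge 2$ as in the argument of Lemma \ref{Cinf}, while $\tilde u$, modulo its remaining scaling freedom, must have $\mathrm{index}(\tilde u)\ge 0$ (as in that argument, and as in the proof of Proposition \ref{II} if $\tilde u$ is multiply covered). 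Every other curve contributes nonnegatively, and any bubble, nontrivial symplectization curve, or nontrivial curve above $\mathbf{C}$ would contribute at least $2$; since the total index is $2$, we conclude $\mathrm{index}(\mathbf{C})=2$, $\mathrm{index}(\tilde u)=0$, there are no bubbles, the symplectization levels consist only of trivial cylinders, and $\mathcal{C}_0,\mathcal{C}_1$ are trivial. Thus $\mathbf{C}$ is a somewhere injective pair of pants in $Z$ with positive ends at $\alpha_1$ and $\alpha_1^{k-1}$ and a single negative end at some orbit $\gamma$ of $\partial E_2$, and $\mathcal{C}_\infty$ consists of trivial cylinders over $\gamma$ together with one cylinder $\tilde u$ in $X_1^2$ from $\alpha_1^k$ to $\gamma$; since $0=\mathrm{index}(\tilde u)=\mathrm{CZ}(\gamma)-\mathrm{CZ}(\alpha_1^k)$ and exactly one iterate of $\alpha_2$ or $\beta_2$ realizes each odd Conley--Zehnder index, $\gamma=\alpha_2^{\ell}$. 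Hence $\tilde u$ is the unique curve of $\mathbf{G}$ mapping to $X_1^2$; being the piece of the pair-of-pants domain containing $\infty$, with no bubbles, it is a cylinder, which we reparametrize so that its negative puncture (asymptotic to $\alpha_1^k$) is at $\infty$ and its positive puncture (asymptotic to $\alpha_2^\ell$) is at $0$. Finally, arrange, as we may, the discs in the definition of $\mathcal{J}^N_{\{0,1,\infty\}}$ so that the disc about $\infty$ is $\{|z|>2\}$; since (u5) forces the splitting locus of $u_N$ to enclose $0,1$ and $2$, the part of the domain mapping to $X_1^2$ lies in $\{|z|>2\}$, and the conformal rescalings used to form $\tilde u$ send every fixed domain point to $z\to\infty$, where $J^N_z$ depends only on $\arg z$; passing to the limit shows $\tilde u$ is $\tilde J_z$-holomorphic for a family $\tilde J_z$ on $X_1^2$ depending only on $\arg z$.

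The main obstacle is the third paragraph: ruling out every configuration except the expected one---a single index-$0$ cylinder $\tilde u$ in $X_1^2$ joined through trivial symplectization levels over $\alpha_2^\ell$ to an index-$2$ pair of pants in $Z$ carrying both point constraints---that is, excluding bubbling, nontrivial symplectization levels and repeated crossings of $\Sigma_2$. It is here that the hypothesis $\mathrm{CZ}(\alpha_1^k)=\mathrm{CZ}(\alpha_1^{k-1})+2$ is essential: it is precisely what pins $\mathrm{index}(\mathbf{G})$ at $2$ and leaves no room beyond the expected building. A further point requiring care is to choose the seed data, alongside making $\mathcal{M}^N(J^N_z)$ regular for all $N$, so that the index inequalities $\mathrm{index}(\mathbf{C})\ge 2$ and $\mathrm{index}(\tilde u)\ge 0$ remain valid in the limit.
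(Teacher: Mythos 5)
You follow the paper's route for Proposition \ref{almost}: split along $\Sigma_2$, organize $\mathbf{G}$ into the distinguished curve $\mathbf{C}$ and the subbuildings $\mathcal{C}_0$, $\mathcal{C}_1$, $\mathcal{C}_{\infty}$ and planar pieces, and close the count using $\mathrm{index}(\mathbf{G})=2$ together with the prescribed angular domain dependence (Lemma \ref{s1}) to bound indices from below. The genuine gap sits exactly at the step you yourself call the main obstacle: you place $\mathbf{C}$ in the $Z$-level ``because $u_N(2)$ lies deep on the $Z$-side of the splitting locus.'' That inference is not valid. The point $z=2$ may be one of the additional punctures $z_j$, in which case the constraint (u5) is carried by a planar subbuilding $\mathcal{D}_j$ attached at $2$ while $\mathbf{C}$ itself lies in a symplectization level above the $\tau_2$-level; this is precisely why Lemma \ref{2cases} only gives ``at or above,'' and why the paper proves a separate lemma, inside the proof of Proposition \ref{almost}, that $\mathbf{C}$ is at the $\tau_2$-level. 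Your subsequent bookkeeping presupposes this placement: the bound $\mathrm{index}(\mathbf{C})\ge 2$ is argued from $\mathbf{C}$ carrying (u2) and (u5), which fails if the point constraint sits on a bubble (and (u2) is carried by $\mathcal{C}_0$ whenever $\mathcal{C}_0$ is nontrivial, a case distinction the paper makes in the proof of Proposition \ref{def}); so the conclusion ``there are no bubbles'' cannot be extracted from a count that already assumed where the constraints live.

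The missing argument is short but must be supplied, and it is the paper's: if $\mathbf{C}$ were above the $\tau_2$-level, then (u5) forces a puncture of $\mathbf{C}$ at $2$, so $\sum_j \mathrm{index}(\mathcal{D}_j)\ge 2$ by Lemma \ref{D}; combined with Lemmas \ref{C1}, \ref{C0} and \ref{Cinfnew} this forces $\mathrm{index}(\mathbf{C})\le 0$, which is incompatible with $\mathbf{C}$ being a curve in $\R\times\partial E_1$ with punctures at $0,1,2,\infty$ and, by Lemma \ref{twoup}, at most two positive ones (via the computation of Lemma \ref{most}, and via Lemmas \ref{C0}, \ref{C1} when the puncture at $0$ or $1$ is negative). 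Note also that the nonnegativity of the index of the $X_1^2$- and $Z$-level curves of $\mathcal{C}_{\infty}$, including a possibly multiply covered $\tilde u$, is not obtained from Proposition \ref{II} (automatic transversality plays no role here); it is Lemma \ref{Cinfnew}, whose proof uses the $S^1$-dependence guaranteed by Lemma \ref{s1} together with the fact that one is no longer in a $1$-parameter family, plus evenness of the conventional index. With that lemma and the placement of $\mathbf{C}$ established, the rest of your argument --- triviality of $\mathcal{C}_0$, $\mathcal{C}_1$ and of all symplectization curves, the identification $\gamma=\alpha_2^{\ell}$ from $\mathrm{index}(\tilde u)=0$, and the $\arg z$-dependence of $\tilde J_z$ --- proceeds as in the paper.
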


\begin{proof}

We analyze the building $\mathbf{G}$ as we analyzed $\mathbf{ F}$. The curves of $\mathbf{ G}$ map to $X_1^2$, $Z$, $Y_1$ or $Y_2$. Those mapping to $Z$ are at the same level, which we refer to again as the $\tau_2$-level. The curves $u_N$ converge to a distinguished curve $\mathbf{ C}$ of $\mathbf{ G}$ which is either at or above the $\tau_2$-level. It has punctures  at $0$, $1$ and  $\infty$, and additional punctures at the possibly empty set of points $\{z_1, \dots, z_l\}$.  We then partition the curves of $\mathbf{ G}$, other that $\mathbf{ C}$, into subsets $\mathcal{C}_0$, $\mathcal{C}_1$, $\mathcal{C}_{\infty}$ and $\mathcal{D}_j$ as before. Lemma \ref{twoup}
holds with $\mathbf{ F}$ replaced by $\mathbf{ G}$ while Lemmas \ref{D}, \ref{C1} and \ref{C0} hold as stated. Lemma \ref{s1} holds for any curves of $\mathbf{ G}$, other than $\mathbf{ C}$, which map to either $X_1^2$ or $Z$. The only significant difference involves our analysis of the subbuilding $\mathcal{C}_{\infty}$. We now have the following.

\begin{lemma}\label{Cinfnew}
The curves of $\mathcal{C}_{\infty}$ mapping to $X_1^2$ or $Z$ all have nonnegative index. The other curves of  $\mathcal{C}_{\infty}$ either all cover trivial cylinders or their collective index is at least $2$.
\end{lemma}

\begin{proof}
By Lemma \ref{s1}, the curves of $\mathcal{C}_{\infty}$  mapping to $Z$ and to $X_1^2$ and not part of a planar component are pseudo-holomorphic with respect to a domain dependent family of almost complex structures that only depends on the natural $S^1$-parameter on  their domain.  As we are no longer considering a  $1$-parameter family of almost-complex structures,  these curves are now expected to have virtual deformation index of at least $0$, modulo reparameterization.  Again,  their virtual deformation index modulo reparameterization,  is one greater than their index as defined by Convention 2.  So, the  index of each of these curves of $\mathbf{C}_{\infty}$ is at least $-1$. As the indices are all even they  are therefore all nonnegative. The other assertions concerning the curves mapping to $Y_1$ and $Y_2$ follow as before from Lemma \ref{most}.
\end{proof}

\begin{lemma}
The curve $\mathbf{ C}$ maps to the $\tau_2$-level.
\end{lemma}

\begin{proof}
Starting with
\begin{equation}\label{preserve2}
2= \mathrm{index}(\mathbf{G})=  \mathrm{index}(\mathbf{ C}) + \mathrm{index}(\mathcal{ C}_0)+ \mathrm{index}(\mathcal{C}_1) + \mathrm{index}(\mathcal{C}_{\infty}) + \sum_j\mathrm{index}(\mathcal{D}_j)
\end{equation}
we observe again that if  $\mathbf{C}$ lies above the $\tau_2$-level then constraint (u5) implies that
\begin{equation*}
\sum_j\mathrm{index}(\mathcal{D}_j) \geq 2.
\end{equation*}
This together with Lemmas \ref{C1}, \ref{C0} and \ref{Cinfnew} imply that $\mathrm{index}(\mathbf{ C}) \le 0$. Since $\mathbf{C}$ must have punctures at $0,1,2$, and $\infty$ this is a contradiction.

\end{proof}

Arguing as in the proof of Proposition \ref{def}, it follows that $\mathbf{ C}$ must have index $2$. In particular, the curves of $\mathbf{ G}$ mapping to $Y_1$ and $Y_2$ must all cover trivial cylinders , and $\mathbf{ C}$ must have exactly three punctures, two positive punctures at $0$ and $1$, asymptotic  to $\alpha_1$ and $\alpha_1^{k-1}$ respectively, and one negative puncture at $\infty$. From this it follows that the only other curve of $\mathbf{ G}$ is a cylinder mapping to $X_1^2$ with index $0$ and a negative puncture at $\infty$ asymptotic to $\alpha_1^k$. As the cylinder has index $0$ its positive puncture is asymptotic to $\alpha_2^{\ell}$.  This is the desired curve $\tilde{u}$ of Proposition \ref{almost}.

\end{proof}


\subsection{Domain Independence}

Choose a sequence $J^n_z$ of families in $\mathcal{J}^0_{\{0,1,\infty\}} \cap \mathcal{J}_{\Sigma_2}$ which are regular as in the statement of Proposition \ref{almost}, and which converge to some $J^{\infty}$ in $\mathcal{J}^0_{\{0,1,\infty\}} \cap \mathcal{J}_{\Sigma_2}$ whose domain dependence is trivial. By Proposition \ref{almost}, this sequence yields
\begin{itemize}
  \item a sequence of almost complex structures $\tilde{J}^n_{z \in\CP^1 \smallsetminus \{0, \infty\}}$ on $X_1^2$ that each only depend on $\arg z$  and which converge to a fixed almost complex structure $\tilde{J}^{\infty}$ on $X_1^2$
  \item a sequence of $\tilde{J}^n_z$-holomorphic curves $\tilde{u}_n \colon \CP^1 \smallsetminus \{0, \infty\}  \to X$ whose punctures at $0$ are positive and asymptotic to $\alpha_2^{\ell}$ and whose punctures at $\infty$ are negative and asymptotic to $\alpha_1^k$.
 \end{itemize}

After passing to a subsequence we may assume that the curves $u_n$ converge to a $\tilde{J}^{\infty}$-holomorphic cylindrical building $\mathbf{ H}$ in $X_1^2$ of index zero and genus zero with the desired asymptotics. This completes the proof of Theorem \ref{alt}.



\begin{thebibliography}{BEHWZ}

\bibitem[BEHWZ]{behwz}
F. Bourgeois, Y. Eliashberg, H. Hofer, K. Wysocki, and E. Zehnder,
 Compactness results in symplectic field theory,
{\em Geom. \& Topol.}, \textbf{7} (2003), 799--888.

\bibitem[CGH]{ch}
D. Cristofaro-Gardiner, R. Hind, Symplectic embeddings of products, to appear in \textit{Comment. Math. Helv.}, 93 (2018), 1--–32.

\bibitem[CGHM]{chm}
D. Cristofaro-Gardiner, R. Hind, D., McDuff, The ghost stairs stabilize to sharp symplectic embedding obstructions,  {\it J. Topol.} 11 (2018), 309--–378.

\bibitem[Dr]{dr}
D. Dragnev, Fredholm theory and transversality for noncompact pseudoholomorphic maps in symplectizations,  \textit{Comm. Pure Appl. Math.} \textbf{57} (2004), 726--763.

\bibitem[EGH]{egh} Y. Eliashberg, A. Givental, H. Hofer, Introduction to Symplectic Field Theory, \textit{GAFA} (2000), Special Volume, 560--673.

\bibitem[HK]{hk}
R. Hind and E. Kerman, New obstructions to symplectic embeddings, \textit{Invent. Math.} 196 (2014), 383--452.

\bibitem[HK2]{hk2}
R. Hind and E. Kerman, Erratum to New obstructions to symplectic embeddings, to appear.

\bibitem[M]{mcduffpre} D. McDuff, On the stabilized symplectic embedding problem for ellipsoids, {\it Eur. J. Math.}, 4 (2018), 356--–371.

\bibitem[MS]{ms} D. McDuff and F. Schlenk, The embedding capacity of $4$-dimensional symplectic ellipsoids, {\it Ann. of Math.}, 175 (2012), 1191--1282.

\bibitem[Hu]{hutlec} M. Hutchings, Lecture notes on embedded contact homology, in {\it Contact and symplectic topology}, 389–--484, Bolyai Soc. Math. Stud., 26, 2014.

\bibitem[O]{opshtein} E. Opshtein, Maximal symplectic packings of $\PP^2$, {\it Compos. Math.}, \textbf{143} (2007), 1558--1575.

\bibitem[Pa]{pa} J. Pardon, Contact homology and virtual fundamental cycles, arXiv:1508.03873.

\bibitem[We]{we} C. Wendl, Automatic transversality and orbifolds of punctured holomorphic curves in dimension four, \textit{Comment. Math. Helv.} \textbf{85} (2010), 347--407.

\end{thebibliography}
\end{document}